\newtheorem{theorem}{Theorem}
\newtheorem{lemma}[theorem]{Lemma}
\newtheorem{definition}{Definition}
\newcommand{\E}{\mathbb{E}}
\DeclareMathOperator{\supp}{supp}
\DeclareMathOperator*{\argmax}{argmax}
\newcommand{\Ber}{\mathrm{Bernoulli}}
\newcommand{\bigotilde}{\tilde{\mathcal{O}}}
\newcommand{\cO}{\mathcal{O}}
\newcommand{\cC}{\mathcal{C}}
\newcommand{\cF}{\mathcal{F}}
\newcommand{\cP}{\mathsf{P}}
\newcommand{\cX}{\mathcal{X}}
\newcommand{\cY}{\mathcal{Y}}
\newcommand{\norm}[1]{\left\|#1\right\|}
\newcommand{\abs}[1]{\left\vert#1\right\vert}
\newcommand{\pr}{\mathbb{P}}
\newcommand{\R}{\mathbb{R}}
\newcommand{\vb}{\boldsymbol{b}}
\newcommand{\vg}{\boldsymbol{g}}
\newcommand{\ve}{\boldsymbol{e}}
\newcommand{\vv}{\boldsymbol{v}}
\newcommand{\vu}{\boldsymbol{u}}
\newcommand{\vx}{\boldsymbol{x}}
\newcommand{\vy}{\boldsymbol{y}}
\newcommand{\vby}{\bar{\boldsymbol{y}}}
\newcommand{\vz}{\boldsymbol{z}}
\newcommand{\half}{\frac{1}{2}}
\DeclareMathAlphabet{\mathsfit}{T1}{\sfdefault}{\mddefault}{\sldefault}
\SetMathAlphabet{\mathsfit}{bold}{T1}{\sfdefault}{\bfdefault}{\sldefault}
\newcommand{\vast}{\bBigg@{3}}
\newcommand{\Vast}{\bBigg@{3.5}}
\newcommand{\ky}{\kappa}
\newcommand{\sigmav}{\sigma}
\newcommand{\sigmas}{\lambda}
\newcommand{\0}{\mathbf{0}}
\newcommand{\fsc}{f^{\text{sc}}}
\newcommand{\Asc}{A^{\text{sc}}}
\newcommand{\fnc}{\bar{f}^{\text{nc}}}
\newcommand{\ft}{\bar{f}^{\text{nc-sc}}}
\newcommand{\fmt}{\bar{f}_m^{\text{nc-sc}}}
\newcommand{\fft}{{f}^{\text{nc-sc}}}
\newcommand{\ffmt}{{f}_m^{\text{nc-sc}}}
\newcommand{\ffts}{{f}^{\text{nc-sc-sg}}}
\newcommand{\ffmts}{{f}_m^{\text{nc-sc-sg}}}
\newcommand{\fts}{\bar{f}^{\text{nc-sc-sg}}}
\newcommand{\fmts}{\bar{f}_m^{\text{nc-sc-sg}}}
\newcommand{\gs}{{h}^{\text{sg}}}
\newcommand{\gd}{{h}}
\newcommand{\bcX}{\cC^{T}_{R_1}}
\newcommand{\bcY}{\cC^{n(T-1)}_{nR_2}}
\newcommand{\bcZ}{\cC^{T-1}_{R_1}}
\begin{document}

\title{Complexity Lower Bounds for Nonconvex-Strongly-Concave Min-Max Optimization}

\author{\name Haochuan Li\footnotemark[1] \email{haochuan@mit.edu} \\
\name Yi Tian\footnotemark[1] \email{yitian@mit.edu} \\
\name Jingzhao Zhang\footnotemark[1] \email{jzhzhang@mit.edu}\\
\name Ali Jadbabaie\footnotemark[1] \email{jadbabai@mit.edu} \\
\addr{Massachusetts Institute of Technology}
}

\renewcommand*{\thefootnote}{\fnsymbol{footnote}}
\footnotetext[1]{Supported in parts by ONR grant N00014-20-1-2394, an MIT-IBM Watson grant and NSF BIGDATA grant 1741341.}
\renewcommand*{\thefootnote}{\arabic{footnote}}

\maketitle

\begin{abstract}
    We provide a first-order oracle complexity lower bound for finding stationary points of min-max optimization problems where the objective function is smooth, nonconvex in the minimization variable, and strongly concave in the maximization variable. We establish a lower bound of $\Omega\left(\sqrt{\ky}\epsilon^{-2}\right)$ for deterministic oracles, where $\epsilon$ defines the level of approximate stationarity and $\kappa$ is the condition number. Our analysis shows that the upper bound achieved in~\citep{lin2020nearoptimal} is optimal in the $\epsilon$ and $\kappa$ dependence up to logarithmic factors. For stochastic oracles, we provide a lower bound of $\Omega\left(\sqrt{\ky}\epsilon^{-2} + \ky^{1/3}\epsilon^{-4}\right)$. It suggests that there is a significant gap between the upper bound $\cO(\kappa^3 \epsilon^{-4})$ in \citep{lin2020gradient} and our lower bound in the condition number dependence. 
\end{abstract}

\section{Introduction}

In this paper, we study the oracle complexity lower bound of the following min-max optimization problem: 
\begin{align}\label{eq:min-max}
	\min_{\vx\in\cX}\max_{\vy\in\cY}f(\vx;\vy),
\end{align}
where $\cX\subset \R^m$ and $\cY\subset \R^n$ are nonempty closed convex sets. 
Such a problem arises in a wide range of applications, e.g., two-player zero-sum games~\citep{neumann1928theorie}, Generative Adversarial Networks~\citep{goodfellow2014generative}, robust optimization~\citep{ben2009robust}, including defending adversarial attacks~\citep{goodfellow2014explaining,madry2017towards}.

The research in min-max problems~\eqref{eq:min-max} has a long history. If $f$ is linear in both $\vx$ and $\vy$, the problem is known as bilinear min-max optimization, for which von Neumann's min-max theorem~\citep{neumann1928theorie} guarantees the existence of a saddle point, a point $(\vx^\ast, \vy^\ast)$ that satisfies strong duality
\begin{align}\label{eq:strong-dual}
    f(\vx^\ast;\vy^\ast)=\min_{\vx\in \cX} \max_{\vy\in \cY} f(\vx;\vy) = \max_{\vy\in \cY} \min_{\vx\in \cX} f(\vx;\vy).
\end{align}
Later, \citet{sion1958general} generalized the statement to convex-concave functions under certain regularity conditions, which led to the theory of KKT conditions and Lagrangian duality. Aside from studying the existence of strong duality, \citet{nemirovski2004prox, nesterov2005smooth} proposed algorithms that can find approximate saddle points for convex-concave objectives with an $\cO(1/\epsilon)$ convergence rate. The rate was proven to be worst-case optimal even for min-max problems with a bilinear cross-term~\citep{ouyang2019lower}. 

Going beyond the assumption of convex-concavity poses a big challenge. In general nonconvex-nonconcave min-max optimization problems, a saddle-point may not exist \citep{jin2020local}. Defining a notion of min-max points that is simultaneously nontrivial and tractable is still an open question. If one uses the standard definition of saddle points as in~\eqref{eq:strong-dual}, then determining its existence is known to be NP-hard~\citep{daskalakis2020complexity}, and finding an approximate local saddle point is PPAD-complete~\citep{daskalakis2020complexity}. Alternatively, a relaxed definition of min-max point known as Stackelberg equilibrium is guaranteed to exist~\citep{jin2020local}, yet only local asymptotic convergence is known~\citep{jin2020local,wang2019solving}.

Due to the lack of a well-formulated suboptimality measure for general nonconvex-nonconcave problems, recent research has considered problems with special structures. For example, ~\citet{lin2020gradient, lin2020nearoptimal} studied nonconvex-concave problems, \citet{yang2020global} studied problems satisfying the two-sided Polyak-Lojasiewicz inequality, \citet{diakonikolas2021efficient} studied problems with weak Minty variational inequality solutions, \citet{mangoubi2020greedy} considered an algorithm specific equilibrium, and \citet{lee2020linear, daskalakis2021independent, wei2021last} studied two-player zero-sum stochastic games. 

Our work focuses on the analysis of gradient oracle complexity in the nonconvex-strongly-concave min-max setting. In the standard convex-concave setup, \citet{lin2020nearoptimal} designed a set of algorithms that achieve near-optimal gradient complexity for all the three variants of convex-concave min-max optimization. However, in the nonconvex-concave setting, it is unclear whether their algorithm is optimal given that no matching lower bound exists. In this work, our main contribution is as follows:

\begin{itemize}
    \item We construct an explicit example on which no first-order zero-respecting algorithm can achieve less than $\Omega(\sqrt{\ky} \epsilon^{-2})$ oracle complexity for smooth nonconvex-strongly-concave problems, matching the $\bigotilde(\sqrt{\ky} \epsilon^{-2})$ complexity in~\citep{lin2020nearoptimal}.
    \item We further extend our result to the stochastic(-oracle) setting, achieving the complexity lower bound $\Omega(\sqrt{\ky}\epsilon^{-2}+\ky^{1/3}\epsilon^{-4})$. Compared against the result in~\citep{lin2020gradient}, our result suggests that the dependency on the condition number may not be optimal. 
\end{itemize}

\subsection{Related work}
\label{sec:related}

\paragraph{Upper bounds.} 
Since standard gradient descent-ascent diverges for even the simplest case of bilinear zero-sum games, researchers proposed different variations to achieve convergence in the convex-concave setting, with possibly additional structures. One of the variations is known as extra-gradient method~\citep{korpelevich1976extragradient, tseng1995linear, nemirovski2004prox, chambolle2011first, yadav2017stabilizing}, which takes two gradient steps per iteration. Another line studies the optimistic gradient method~\citep{rakhlin2013optimization, daskalakis2017training, gidel2018variational, mertikopoulos2018optimistic, hsieh2019convergence}. \citet{mokhtari2020convergence} proposed a unified analysis through the lens of proximal update. Some works~\citep{alkousa2019accelerated,lin2020nearoptimal} focused on accelerating the known rates in terms of the conditional number dependence. \citet{golowich2020last} studied last iterate convergence. 

Convergence under the nonconvex-concave setup is less studied. \citet{rafique2018non} studied a proximal version of gradient methods. \citet{lin2020gradient} studied gradient descent ascent with different time scales. \citet{thekumparampil2019efficient} proposed an implicit algorithm for the nonconvex-concave setting. Many other works also studied different variations of convergence~\citep{nouiehed2019solving, kong2019accelerated, lu2020hybrid, ostrovskii2020efficient}.

\paragraph{Lower bounds.} 
Our work follows the complexity framework introduced by Nemirovski in the 1980s. The most classical lower bounds are due to Nemirovski and Nesterov, and are discussed in their textbooks~\citep{Nemirovsky1992InformationbasedCO, nesterov2018lectures}. The central idea is known as zero-chains, for which all first-order zero-respecting methods activate the coordinates one by one.
Recently, much progress was made~\citep{fang2018spider,carmon2019lower,carmon2021lower,arjevani2019lower} by extending the original analysis to the nonconvex setting. Our work builds upon these results. 
Another line of works studies lower bounds for quadratic problems in min-max setting and utilizes a different framework sometimes known as Stationary Canonical Linear Iterative (SCLI)~\citep{arjevani2016iteration, ouyang2019lower, zhang2020lower, ibrahim2020linear}. It exploits the closed-form updates for linear dynamics and studies convergence properties via the transition matrices.

As we were preparing the final draft of this paper, we note that 2-3 weeks ago \citet{zhang2021complexity} provided a similar lower complexity bound in the deterministic nonconvex-strongly-concave setting using a different construction, yet following the same formalism as Nemirovski and Carmon~\citep{nemirovski1983book, carmon2019lower}. Whereas \citet{zhang2021complexity} provided novel analyses with extensions to the upper and lower bounds in the finite-sum setting, our proofs and construction are different and we also extend the results to the stochastic setting.

\section{Preliminaries}
Before introducing our main results, we describe the problem setup and algorithm complexity in this section. First, we introduce the notation. Next, we define the function class and algorithm class. Finally, we provide the formal definition of optimality measure and gradient complexity.

\paragraph{Notation.}
We use bold lower-case letters to denote vectors and use $x_i$ to denote the $i$-th coordinate of vector $\vx$. Let $\supp(\vx):=\left\{i:x_i\neq 0\right\}$ denote the support of $\vx$. 
Let $\norm{\vx}_2$ and $\norm{\vx}_\infty$ denote its $\ell_2$ and $\ell_\infty$ norm respectively. 
For a matrix $A$, we use $A_{i,j}$ to denote its $(i,j)$-th entry and $A^\top$ its transpose. We use $\norm{A}_2$ to denote its spectral norm and $\det(A)$ its determinant.
We use calligraphic upper-case letters to denote sets, as in $\cX$ and $\cY$. 
For a nonempty closed convex set $\cX\in\R^d$, let $\cP_{\cX}$ denote the Euclidean projection onto $\cX$. 
We also use a semi-comma besides commas to split minimization and maximization variables of a function. For example, when we write $f(\vx,\vz;\vy)$, it means $\vx$ and $\vz$ are the variables to minimize and $\vy$ is the variable to maximize.
Finally, we use the standard $\cO(\cdot)$ and $\Omega(\cdot)$ notation, with $\bigotilde(\cdot)$ and $\Tilde{\Omega}(\cdot)$ further hiding log factors.

\subsection{Function class}
\label{sec:def_fun_class}

Our analysis focuses on the class of smooth nonconvex-strongly-concave functions defined below. 

\begin{definition}
	\label{def:function_class}
    Given $L\ge \mu>0$ and $\Delta>0$, we use $\cF(L, \mu, \Delta)$ to denote the set of all functions $f:\cX\times\cY\to \R$ for some nonempty closed convex sets $\cX\subset \R^m$ and $\cY\subset\R^n$ where $m,n\in\mathbb{N}$, which satisfies the following assumptions: 
    \begin{enumerate}
		\item  $f$ is $L$-smooth, that is, for every $\vx_1, \vx_2\in \cX$ and $\vy_1, \vy_2 \in \cY$, 
		\begin{align*}
			\norm{\nabla f(\vx_1; \vy_1) - \nabla f(\vx_2; \vy_2) }_2 \le L \norm{(\vx_1, \vy_1) - (\vx_2, \vy_2)}_2 ; 
		\end{align*}
		\item For any fixed $\vx\in \cX$, $f$ is $\mu$-strongly concave in $\vy$, that is, for any $\vy_1, \vy_2 \in \cY$, 
		\begin{align*}
			f(\vx; \vy_1) \le f(\vx; \vy_2) + \nabla_{\vy} f(\vx; \vy_2) \cdot (\vy_1 - \vy_2) - \frac{1}{2} \mu \norm{\vy_1 - \vy_2}_2^2 ; 
		\end{align*}
        \item $f_m(\0)-\min_{\vx\in\cX} f_m(\vx)\le \Delta$, where $f_m(\vx) := \max_{\vy\in\cY}f(\vx;\vy)$.
    \end{enumerate}
\end{definition}

For any fixed $\vy\in \cY$, $f$ is potentially nonconvex in $\vx$. Note that $\cF(L, \mu, \Delta)$ includes functions with domain on $\R^m\times \R^n$ for all $m,n\in\mathbb{N}$, following the framework of dimension-free convergence guarantees \citep{nemirovski1983book, nesterov2018lectures}. In Section~\ref{sec:our_construction}, we will construct a hard instance with domain dimensions $m, n$ both growing inversely in the required accuracy $\epsilon$.

\subsection{Algorithm class}

In this section, we describe the algorithms of interest for the min-max problems with the function class defined above. In particular, we restrict our analysis to first-order algorithms that optimize objectives using first-order oracles defined below.

\begin{definition}[Deterministic first-order oracle]
	\label{def:deterministic_oracle}
	The deterministic first-order oracle of a differentiable function $f:\cX\to\R$ is a mapping $O: \vx \mapsto \left(f(\vx),\nabla f(\vx)\right)$ for $\vx \in \cX$.
\end{definition}

\begin{definition}[Stochastic first-order oracle]
	\label{def:stochastic_oracle}
	A stochastic first-order oracle with bounded variance $\sigma^2$ of a differentiable function $f:\cX\to\R$ is a mapping $O: \vx \mapsto \left(f(\vx),\vg(\vx,\xi)\right)$ for $\vx\in\cX$, where $\xi$ is a random variable satisfying $\E[\vg(\vx,\xi)]=\nabla f(\vx)$ and $\E\left[\norm{\vg(\vx,\xi)-\nabla f(\vx)}_2^2\right]\le \sigma^2.$
\end{definition}

We say $O$ is a first-order oracle if it is a deterministic or stochastic first-order oracle. 

Furthermore, we consider first-order algorithms satisfying the zero-respecting assumption. Formally, we define first-order algorithms as follows.

\begin{definition}[First-order algorithm]  \label{def:first_order_alg}
    A first-order (zero-respecting) algorithm is one that for any function $f:\cX\to\R$ and its associated first-order oracle $O_f: \vx \mapsto (f(\vx), \vg)$, the $(t+1)$-th iterate $\vx^{t+1}$ satisfies
    \begin{align*}
        \vx^{t+1}\in \left\{\cP_{\cX}(\vv):\supp(\vv)\subset\bigcup_{0\le i\le t}\left(\supp(\vx^i)\cup \supp(\vg^i)\right)\right\}.
    \end{align*}
\end{definition}
Definition~\ref{def:first_order_alg} extends the standard zero-respecting algorithms \citep{carmon2019lower,arjevani2019lower} to the constrained setting. It covers most existing first-order methods used in the literature including (projected) stochastic gradient descent, adaptive methods, and more importantly, the algorithms used in \citep{lin2020gradient,lin2020nearoptimal} which achieve the upper bounds for nonconvex-strongly-concave min-max optimization in the deterministic and stochastic settings, respectively. There is a standard reduction from a lower bound for zero-respecting algorithms to that for arbitrary deterministic algorithms with deterministic or even stochastic first-order oracles~\citep{carmon2019lower,arjevani2019lower}. We defer this extension to future work.

\subsection{Optimality via approximate stationarity}

We measure the progress of solving the nonconvex-strongly-concave problem via the gradient norm of the maximized function with respect to the minimization variable $\vx$.
Following~\cite{ghadimi2013minibatch}, we define the notion of $\epsilon$-stationary points in presence of constraints as follows.

\begin{definition} \label{def:stationary}
    Let $\cX$ be a nonempty closed convex set. A point $\vx\in\cX$ is said to be an $\epsilon$-stationary point of an $L$-smooth function $f:\cX\to\R$ if
    \begin{align*}
        L\norm{\cP_\cX \left[\vx-(1/L)\nabla f(\vx)\right]-\vx}_2\le \epsilon.
    \end{align*}
\end{definition}

Note that the algorithm studied in~\citep{lin2020gradient} also assumes bounded domain and used the same notion of stationarity. This definition of stationary points reduces to $\norm{\nabla f(\vx)}_2\le \epsilon$ when $\cX=\R^d$ for some $d\in\mathbb{N}$.
For any $f\in\cF(L, \mu, \Delta)$,  Definition~\ref{def:stationary} applies to $f_m$ as it is $L_m$-smooth, where $L_m\le (\ky+1)L$ by~\citep[Lemma~4.3]{lin2020gradient}. 

Our goal of solving the nonconvex-strongly-concave min-max optimization problem is to find an $\epsilon$-stationary point of $f_m$. We show that no deterministic first-order algorithm can achieve less than $\Omega(\sqrt{\ky} \epsilon^{-2})$ gradient complexity for smooth nonconvex-strongly-concave problems, matching the $\bigotilde(\sqrt{\ky} \epsilon^{-2})$ complexity in \citep{lin2020nearoptimal}. We further extend our result to the stochastic setting, achieving a complexity lower bound $\Omega(\sqrt{\ky}\epsilon^{-2}+\ky^{1/3}\epsilon^{-4})$.

\section{Main results}

In this section, we present our main results on the minimum number of gradient oracle calls required to find an $\epsilon$-stationary point. The results for deterministic and stochastic settings are presented in the following two subsections respectively.

\subsection{Lower bound on first-order oracle complexity in the deterministic setting}

Nonconvex-strongly-concave min-max optimization subsumes nonconvex optimization, the lower bound $\Omega(L\Delta /\epsilon^2)$ in nonconvex optimization~\citep{carmon2019lower} also holds for nonconvex-strongly-concave min-max optimization. However, compared with the $\bigotilde(L \Delta \sqrt{\kappa} / \epsilon^2)$ upper bound~\citep{lin2020nearoptimal}, a $\sqrt{\kappa}$ factor is missing. Our main result below fills this gap, showing that the known rate by~\cite{lin2020gradient} is optimal up to log factors.

\begin{theorem} \label{thm:lb-nsc}
	For any $\mu, L, \Delta, \epsilon>0$ such that $\ky=L/\mu\ge 1$, there exists a function instance $f:\R^m\times\R^n\to \R$ in $\cF(L, \mu, \Delta)$ for some $m,n\in\mathbb{N}$ with its deterministic first-order oracle such that for any first-order algorithm, we have $ \norm{ \nabla f_m(\vx^t)}_2> \epsilon$, where $f_m(\vx):=\max_{\vy}f(\vx,\vy)$, whenever
	\begin{align*}
		t\le  \frac{c_0 L \Delta\sqrt{\ky}}{\epsilon^2},
	\end{align*}
	where $c_0$ is a numerical constant.
\end{theorem}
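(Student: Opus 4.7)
The plan is to combine two classical zero-chain constructions into a single min-max instance: a \citet{carmon2019lower}-style nonconvex zero-chain in the minimization variable $\vx$, and a Nesterov-style strongly concave quadratic in the maximization variable $\vy$, wired together so that each activation of a new $\vx$-coordinate is gated by a long chain of activations in $\vy$.

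\textbf{Construction.} Take $\vx\in\R^T$ with $T = \Theta(L\Delta/\epsilon^2)$ and split $\vy$ into $T$ blocks $\vy^{(1)},\ldots,\vy^{(T)}\in\R^N$ of length $N = \Theta(\sqrt{\ky})$. The hard instance has the schematic form
\begin{equation*}
    f(\vx, \vy) = F(\vx, \vy) - \frac{\mu}{2}\sum_{i=1}^{T}\vy^{(i)\top}(\mI + \beta\mT)\vy^{(i)},
\end{equation*}
where $\mT\in\R^{N\times N}$ is the standard discrete-Laplacian zero-chain matrix, $\beta = \Theta(\ky)$ is tuned so that the inner quadratic has condition number exactly $\ky$, and $F$ is a modification of the Carmon--Duchi--Hinder--Sidford hard nonconvex function in which every \emph{link} $x_i \to x_{i+1}$ of the original chain is routed through the block $\vy^{(i)}$. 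Concretely, $F$ is built so that $\partial F/\partial y^{(i)}_1$ vanishes unless $x_i \neq 0$ and $\partial F/\partial x_{i+1}$ vanishes unless $y^{(i)}_N \neq 0$, while the tridiagonal structure of $\mT$ gives the standard zero-chain in $\vy$: $\partial f/\partial y^{(i)}_{k+1}$ vanishes unless $y^{(i)}_k \neq 0$. All coefficients are rescaled so that $f$ is $O(L)$-smooth, $\mu$-strongly concave in $\vy$, and $f_m(\0) - \min_\vx f_m(\vx) \le \Delta$.

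\textbf{Counting argument.} By induction on the oracle call $t$, a zero-respecting algorithm can open at most one new coordinate of $(\vx,\vy)$ per query, and the gating above forces the unique opening order $x_1 \to y^{(1)}_1 \to \cdots \to y^{(1)}_N \to x_2 \to y^{(2)}_1 \to \cdots \to x_T$. Activating $x_T$ therefore requires at least $T + (T-1)N = \Omega(\sqrt{\ky}\, L\Delta/\epsilon^2)$ queries. Before this threshold, the inner maximizer $\vy^\star(\vx) = \arg\max_\vy f(\vx,\vy)$ is an explicit linear function of $\vx$ (the $\vy$-part of $f$ is quadratic), and a direct calculation shows that the $x_T$-component of $\nabla f_m(\vx^t) = \nabla_\vx f(\vx^t, \vy^\star(\vx^t))$ inherits a $\Theta(\epsilon)$ magnitude from the underlying Carmon zero-chain. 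Hence $\norm{\nabla f_m(\vx^t)}_2 > \epsilon$ for every $t$ below the threshold, which is the claim.

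\textbf{Main obstacle.} The delicate point is the joint tuning of the chain length $N$, the strong-concavity parameter $\beta$, and the coupling strength inside $F$, so as to simultaneously guarantee that (i) the effective condition number in $\vy$ is exactly $\ky$, (ii) the cross-Hessian $\partial^2 f/\partial\vx\partial\vy$ does not violate the global $O(L)$-smoothness bound, (iii) the zero-chain property of the Carmon-style $\vx$-part survives being routed through $\vy$, and (iv) the stationarity measure applies to the \emph{envelope} $f_m$, not to $\nabla_\vx f$ at the iterate. Item (iii) is the most subtle: the nonlinear Carmon construction can in principle leak gradient mass into un-activated coordinates and short-circuit the intended gating, so $F$ must be rebuilt link-by-link with explicit cutoffs by $y^{(i)}_N$ (for the chain forward) and $x_i$ (for the chain into $\vy^{(i)}$). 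Item (iv) is tractable because the quadratic-in-$\vy$ structure gives a closed-form $\vy^\star(\vx)$, but one has to verify that the quadratic correction this adds to $\bar{f}^{\text{nc}}$ in the envelope does not cancel the Carmon gradient on un-activated $\vx$-coordinates.
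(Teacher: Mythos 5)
Your high-level plan --- interleave a Carmon-style nonconvex zero-chain with Nesterov-style strongly concave sub-chains so the effective chain length becomes $\Theta(\sqrt{\ky}\,T)$ --- matches the paper's. But the construction as you describe it is internally contradictory, and the paper avoids the contradiction with a structural device you omit. You ask simultaneously for a Carmon-style cutoff ``by $y^{(i)}_N$ (for the chain forward)'' and, in item (iv), a closed-form $\vy^\star(\vx)$ from ``the quadratic-in-$\vy$ structure.'' A $\Psi$-gated forward link such as $\Psi(\pm y^{(i)}_N)\Phi(\mp x_{i+1})$ is not quadratic in $\vy$, which kills the closed form; worse, $\Psi$ is convex on part of $(1/2,1)$ with $\sup|\Psi''|$ a fixed numerical constant, so once the whole instance is rescaled to $O(L)$-smoothness this term contributes $+\Theta(L)$ to $\partial^2 f/\partial(y^{(i)}_N)^2$, leaving no headroom to enforce $\mu$-strong concavity in $\vy$ for $\mu=L/\ky$ arbitrarily small. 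Making the forward gate bilinear instead preserves concavity and the closed form, but then that link carries no Carmon nonlinearity and the chain collapses to a tractable quadratic.

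The paper's fix is to introduce auxiliary \emph{minimization} variables $\vz\in\R^{T-1}$, one per block boundary, and to place the $\Psi/\Phi$ nonlinearity only between minimization variables (the link $z_i\to x_i$); every coupling touching $\vy$ --- from $x_i$ into $\vby^{(i)}$, along $\vby^{(i)}$, and from $\vby^{(i)}$ to $z_{i+1}$ --- is bilinear or quadratic. This keeps $f$ $\mu$-strongly concave in $\vy$ for any $\ky$, gives a linear $\vy^\star(\vx,\vz)$, and yields a closed-form envelope $\fmt(\vx,\vz)$ that is Carmon-style in $(\vx,\vz)$ plus a quadratic. A second nontrivial step remains which you flag but do not solve: the quadratic correction from maximizing over $\vy$ must not swamp the Carmon gradient at the phase-transition index. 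The paper tunes the free coefficients $c_1,c_2$ and the coupling scale $C$ (via Lemma~\ref{lem:quadratic_max}) so the envelope's quadratic piece is exactly $6\bigl(x_i-\tfrac12 z_{i+1}\bigr)^2$, and Lemma~\ref{lem:gradient_lower_bound} then bounds $\norm{\nabla\fmt}_2>1/3$ by evaluating the combination $\partial/\partial x_{j-1}+2\,\partial/\partial z_j$, chosen precisely so the quadratic contributions cancel and the $\Omega(1)$ Carmon piece survives. Without the $\vz$ buffer and this tuning, your outline does not close.
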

Similar to the lower bound in~\citep{carmon2019lower}, our lower bound applies to dimension-free optimization in nature. That is, for a given $\epsilon$, we construct a hard instance with dimension $d = \cO(1/\epsilon^2)$, which can be very large if $\epsilon$ is small. The discussion on the proof is deferred until Section~\ref{sec:our_construction}.

\subsection{Lower bound in the stochastic setting}
In the stochastic setting, \citet{arjevani2019lower} provided a lower bound of $\Omega\left(L\Delta\sigma^2/\epsilon^4\right)$ for smooth nonconvex optimization which is a special case of nonconvex-strongly-concave min-max optimization. Our analysis improves this bound by a factor of $\ky^{1/3}$ in Theorem~\ref{thm:lb-nsc-s}.

\begin{theorem} 
	\label{thm:lb-nsc-s}
	For any $\mu, L, \Delta, \epsilon, \sigmav>0$ such that $\ky=L/\mu\ge 1$, there exists a function instance $f:\cX\times\cY\to \R$ in $\cF(L, \mu, \Delta)$ and a stochastic first-order oracle $O$ for $f$ with variance $\sigmav^2$ such that for any first-order algorithm, we have $\E\left[L_m\norm{\cP_\cX \left[\vx^t-(1/L_m)\nabla f_m(\vx^t)\right]-\vx^t}_2\right]> \epsilon$, where $f_m(\vx):=\max_{\vy}f(\vx,\vy)$, whenever
		\begin{align*}
		t\le  c_0L\Delta\left(\frac{\sqrt{\ky}}{\epsilon^2}+\frac{\ky^{1/3}\sigma^2}{\epsilon^4}\right)
	\end{align*}
	where $L_m$ is the smoothness parameter of $f_m$ and $c_0$ is a numerical constant.
\end{theorem}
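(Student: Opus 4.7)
The plan is to combine the deterministic hard instance from Theorem~\ref{thm:lb-nsc} with the probabilistic zero-chain technique developed by \citet{arjevani2019lower} for stochastic nonconvex lower bounds. The first term $L\Delta\sqrt{\kappa}/\epsilon^2$ comes essentially for free: a stochastic first-order oracle with variance $\sigma^2$ can simulate a deterministic oracle (set the noise to zero), so any stochastic lower bound inherits the deterministic one as a special case. Taking the maximum of the two regimes is what produces the sum in the statement. It therefore remains to exhibit a separate construction forcing the $L\Delta\kappa^{1/3}\sigma^2/\epsilon^4$ term.

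For that term, I would rescale and chain-concatenate the deterministic instance from Section~\ref{sec:our_construction} and equip it with a ``probability-$p$'' oracle. Concretely, on each query the oracle returns the true gradient rescaled by $1/p$ with probability $p$, and the zero vector otherwise; it is unbiased and has variance bounded by $(1-p)\norm{\nabla f}_2^2/p$, which we cap by $\sigma^2$ via a suitable choice of $p$. Because a zero-respecting algorithm activates a new coordinate only when it first receives a nonzero entry in that coordinate, the number of queries needed to uncover each additional active coordinate is stochastically dominated by a geometric random variable with parameter $p$. A standard Chernoff argument on sums of independent geometric waiting times then gives that revealing all $T$ coordinates of the chain requires at least $\Omega(T/p)$ queries with high probability.

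The $\kappa^{1/3}$ exponent then emerges from balancing three constraints: each newly activated coordinate may decrease $f_m$ by at most $\Delta/T$; the per-coordinate gradient scale inflated by $1/p$ must respect the variance budget $\sigma^2$; and the full chain must remain $L$-smooth while the inner strongly-concave block supplies the $\mu$-term with $\kappa=L/\mu$. In the deterministic analysis of Theorem~\ref{thm:lb-nsc}, the inner block inflates smoothness by a factor tied to $\kappa$, and this is precisely where $\sqrt{\kappa}$ appears. In the stochastic regime the outer $1/\epsilon^4$ rate forces a different balance between the per-coordinate scale, the probability $p$, and the chain length $T$; optimizing those three parameters against $\epsilon,\sigma,L,\mu$ yields $\kappa^{1/3}$ in place of $\sqrt{\kappa}$.

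The main obstacle will be simultaneously (i) keeping the instance inside $\cF(L,\mu,\Delta)$ with the prescribed strong-concavity parameter after rescaling, (ii) enforcing the variance bound $\sigma^2$ uniformly over all queries and all iterates, and (iii) certifying the \emph{constrained} stationarity criterion of Definition~\ref{def:stationary} rather than merely $\norm{\nabla f_m(\vx^t)}_2>\epsilon$. For (iii) I would take the feasible set $\cX$ to be a box strictly larger than the region reachable by zero-respecting iterates within the claimed budget, so the projected gradient step reduces to the ordinary gradient step and the stationarity measure collapses to the gradient norm on which the chain analysis operates. Combining these ingredients and taking expectations over the oracle randomness then delivers the $\kappa^{1/3}\sigma^2/\epsilon^4$ term and completes the proof.
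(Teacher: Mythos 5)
Your high-level plan is right: inherit the $L\Delta\sqrt{\kappa}/\epsilon^2$ term from the deterministic case (a stochastic oracle can simulate a deterministic one), and get the $L\Delta\kappa^{1/3}\sigma^2/\epsilon^4$ term from a probability-$p$ zero-chain \`a la \citet{arjevani2019lower}; this is exactly the paper's strategy, as is your observation that the sum is equivalent to the max of the two regimes. However, there is a genuine gap in the step where you propose to ``rescale and chain-concatenate the deterministic instance from Section~\ref{sec:our_construction}'' and simply restrict it to a box.

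The deterministic hard instance $\ft$ \emph{cannot} be reused this way, and the paper explicitly points this out: the quadratic sub-chain $\gd(x,z;\vy)=-\tfrac{1}{2}\vy^\top(\tfrac{1}{n^2}I_n+A)\vy+\sqrt{C/n}\,\vb_{x,z}^\top\vy$ has $\partial_{\vy}\gd=-(\tfrac{1}{n^2}I_n+A)\vy+\sqrt{C/n}\,\vb_{x,z}$, and since the maximizer $\vy^\ast$ has $\ell_\infty$ norm $\Theta(n)$, any box large enough to contain $\vy^\ast$ (necessary to preserve $\fmt$) contains points where $\|\nabla\gd\|_\infty=\Theta(n)$, which blows up as $n\to\infty$. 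So Lemma~\ref{lem:arj-l3} cannot be applied with $G=\cO(1)$, and your variance cap $\sigma^2\ge(1-p)\|\nabla f\|^2/p$ would force $p$ to shrink with $n$, killing the bound. The paper's fix is to replace $\gd$ by $\gs(x,z;\vy)=\tfrac{C}{n}\bigl[-\tfrac{1}{2}\vy^\top(\tfrac{1}{n^2}I_n+A)\vy+\vb_{x,z}^\top\vy\bigr]$, which has the \emph{same} maximum (Lemma~\ref{lem:eqv_of_gm}), so $\fmts=\fmt$ on the box, but has $\|\nabla_\vy\gs\|_\infty=\cO(1)$ uniformly on $\cC^n_{nR_2}$. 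The price is that $\gs$ is only $\tfrac{C}{n^3}$-strongly-concave rather than $\tfrac{1}{n^2}$-strongly-concave, forcing $n=\cO(\kappa^{1/3})$ instead of $n=\cO(\sqrt{\kappa})$. This concrete trade-off --- not an abstract ``different balance of parameters'' --- is the reason the exponent drops from $1/2$ to $1/3$, and your proposal does not identify it. Without this modification the argument does not go through.

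Two smaller points. First, your oracle zeroes out the \emph{entire} gradient with probability $1-p$, giving variance $(1-p)\|\nabla f\|_2^2/p$, which can be a factor $T$ larger than the $(1-p)G^2/p$ obtained by perturbing only the next-to-discover coordinate as in Lemma~\ref{lem:arj-l3}; you should use the coordinate-wise version. Second, you cannot in general take $\cX$ so large that projection is inert --- the paper instead keeps a fixed box $\cC^T_{R_1}\times\cC^{T-1}_{R_1}$ and proves directly (Lemma~\ref{lem:gradient_lower_bound_sg}) that the \emph{projected} gradient mapping stays large, splitting on whether the gradient step leaves the box.
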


\citet{lin2020gradient} provided an upper bound of $\cO\left(\ky^3\epsilon^{-4}\right)$. 
Therefore, there is a gap between our lower bound and their upper bound in terms of the dependency on $\ky$. We defer closing this gap to future work. 

In summary, our proposed lower bounds improve the known ones by a multiplicative dependence on the condition number. Before proceeding to discuss the concrete techniques, we first summarize the general framework for establishing lower bounds in the next section.

\section{Framework for proving lower bound}

In this section, we provide an outline for the proof of the lower bound in \citep{nesterov2018lectures} and \citep{carmon2019lower}, which lay the foundation for our construction of the hard instance. Both works utilize the notion of zero-chains \citep{carmon2019lower}, which instantiates a class of hard functions for optimization. We first define a zero-chain and then discuss how it is used to establish complexity lower bounds.

\begin{definition}[Zero-chain]
\label{def:zero-chain}
	We say a function $f:\cX\to \R$, where $\cX\subset \R^d$, is a (first-order) zero-chain if for every $1\le i\le d$,
	\begin{align*}
		\supp(\vx) := \{i:x_i\neq 0\} \subset \{1, \ldots, i-1\} \implies \supp(\nabla f(\vx))\subset \{1, \ldots, i\}.
	\end{align*}
\end{definition}

Suppose the domain $\cX\subset \R^d$ satisfies $\supp(\cP_{\cX}(\vx))=\supp(\vx)$ for all $\vx\in\R^d$, i.e., projecting $\vx$ onto $\cX$ does not change its support. For example, this requirement holds when $\cX$ is a hypercube or the whole space $\R^d$. 
If we run a first-order algorithm on a zero-chain initialized at $\vx^0 = \0$ (which we assume to hold without loss of generality) with a deterministic first-order oracle, then at each iteration, at most one new coordinate of $\vx$ becomes nonzero (``discovered''). Therefore, $\supp(\vx^t)\subset \{1,\ldots,t\}$. Then we obtain a lower complexity bound of $T$ suppose we can show a good solution exists only if at least $T$ coordinates are discovered. 

Therefore, the key to proving a lower bound of first-order algorithms with a deterministic oracle is to find a function $f$ such that: 
\begin{enumerate}
	\item it is a zero-chain that belongs to the function class we are interested in; and that
	\item we cannot obtain an $\epsilon$-optimal solution if the $t$-th coordinate of $\vx$ is zero for every $t\ge T$.
\end{enumerate}
This is actually a general strategy for proving lower bounds of first-order methods, used in the lower bound construction both by~\citet{carmon2019lower} for smooth nonconvex optimization and by us here for nonconvex-strongly-concave min-max optimization.

In the stochastic setting, we utilize the generalized notion known as the probability-$p$ zero-chain~\citep{arjevani2019lower} to prove a lower bound.

\begin{definition}[Probability-$p$ zero-chain]
	A function $f:\cX\to\R$ with a stochastic first-order oracle $O: \vx \mapsto(f(\vx),\vg(\vx,\xi))$ is a probability-$p$ zero-chain if
	\begin{align*}
		\supp(\vx) \subset \{1, \ldots, i - 1\} \quad\implies\quad\left\{\begin{array}{ll}
		     & \pr\left(\supp(\vg(\vx,\xi)) \not \subset \{1, \ldots, i - 1\}\right) \le p, \\
		     & \pr\left(\supp(\vg(\vx,\xi)) \subset \{1, \ldots, i \}\right) =1.
		\end{array} \right.
	\end{align*}
\end{definition}

For a probability-$p$ zero-chain, at each iteration, a new coordinate is discovered with probability at most $p$ if $\supp(\cP_{\cX}(\vx))=\supp(\vx)$ for all $\vx$. 
Therefore, it takes at least $1/p$ steps in expectation to discover a new coordinate. Formally, the following lemma states that it takes $\cO(T/p)$ iterations to reach the end of a probability-$p$ zero-chain with length $T$.

\begin{lemma}[{\citep[Lemma 1]{arjevani2019lower}}]
	\label{lem:arj-l1}
	Let $f:\cX\to\R$, where $\cX\in\R^T$ satisfying $\supp(\cP_{\cX}(\vx))=\supp(\vx)$ for all $\vx\in\R^T$, be a probability-$p$ zero-chain with a stochastic first-order oracle. For any first-order algorithm, we have with probability at least $1-\delta$, $x_T^t=0$ for all $t \le \frac{T - \log(1/\delta)}{2p}$.
\end{lemma}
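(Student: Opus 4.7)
The plan is to introduce a monotone integer-valued \emph{progress index} $\pi_t$, show that its increments are stochastically dominated by independent $\Ber(p)$ random variables, and conclude via a Chernoff/Bennett tail bound for the resulting binomial.

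Concretely, set $\pi_t := \max\bigl(\{0\}\cup\{i:\,i\in\supp(\vx^s)\cup\supp(\vg^s)\text{ for some }0\le s\le t\}\bigr)$, with $\pi_0=0$. The zero-respecting property (Definition~\ref{def:first_order_alg}) together with $\supp(\cP_{\cX}(\vv))=\supp(\vv)$ yields $\supp(\vx^{t+1})\subseteq\{1,\ldots,\pi_t\}$. Applying the probability-$p$ zero-chain definition to $\vx^{t+1}$ with index $i=\pi_t+1$ then gives, almost surely, $\supp(\vg^{t+1})\subseteq\{1,\ldots,\pi_t+1\}$, and conditionally on the filtration $\mathcal{F}_t$ through step $t$, $\pr(\supp(\vg^{t+1})\not\subseteq\{1,\ldots,\pi_t\}\mid\mathcal{F}_t)\le p$. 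Hence $\pi_{t+1}-\pi_t\in\{0,1\}$ with $\pr(\pi_{t+1}>\pi_t\mid\mathcal{F}_t)\le p$, and the key observation is that $x_T^t=0$ as soon as $\pi_t<T$.

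A standard coupling now produces i.i.d.\ $B_1,\ldots,B_t\sim\Ber(p)$ with $\pi_t\le S_t:=\sum_{s=1}^tB_s$ almost surely, so it suffices to show $\pr(S_t\ge T)\le\delta$ for $t\le(T-\log(1/\delta))/(2p)$. For this I would use the Cram\'er--Chernoff bound $\pr(S_t\ge T)\le\exp\bigl(-tp\cdot\psi(T/(tp))\bigr)$ with $\psi(u)=u\log u-u+1$, obtained by optimizing $\lambda\mapsto\exp(tp(e^\lambda-1)-\lambda T)$ at $\lambda=\log(T/(tp))$. Writing $m:=tp\le(T-\log(1/\delta))/2$ and $r:=\log(1/\delta)/T\in[0,1]$, the required inequality reduces to the one-variable elementary statement $\log(2/(1-r))\ge 1/2+3r/2$, which holds throughout $[0,1]$ (its derivative vanishes at $r=1/3$ and the minimum value is $\log 3-1>0$).

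The probabilistic scaffolding in the first half is the conceptually clean part and cleanly separates the algorithmic ingredients (zero-respecting recursion, support-preserving projection) from the stochastic content of the hypothesis (probability-$p$ zero-chain). The main, though mild, technical obstacle is the final Chernoff calculation: a crude choice such as $\lambda=\log 2$ only produces $\delta^{1/2}$, so one must optimize $\lambda$ fully (equivalently, invoke a Bennett-type bound) in order to recover the precise constant $2$ in the denominator of $(T-\log(1/\delta))/(2p)$.
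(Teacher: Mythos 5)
Your argument is correct and complete: the progress-index reduction to a dominated Bernoulli sum, the conditional-probability bookkeeping, and the optimized Chernoff bound (whose worst case $\log(2/(1-r))\ge 1/2+3r/2$ at $r=1/3$ checks out, with slack $\log 3-1$) together give exactly the stated threshold $(T-\log(1/\delta))/(2p)$. The paper itself imports this lemma from \citet{arjevani2019lower} without proof, and your reconstruction follows essentially the same route as that reference, so there is nothing to flag.
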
 
Therefore, the gradient complexity is enlarged by a factor of $1/p$ compared with the deterministic setting. To obtain a lower bound in the stochastic setting, we first find a zero-chain $f$ satisfying the two requirements in the general strategy we presented above. Then we construct a stochastic first-order oracle which discovers the next coordinate with probability $p$ so that we obtain a probability-$p$ zero-chain. Note that we can not choose an arbitrarily small $p$ since we need to ensure the variance of the stochastic oracle is bounded.

For the ease of exposition of our construction, we now briefly review the lower bound construction for smooth strongly-convex minimization by~\citet{nesterov2018lectures} and that for smooth nonconvex optimization by~\citet{carmon2019lower}. As we shall see in the next section, the hard functions in these two cases are the building blocks of our construction.

\subsection{Smooth strongly-convex minimization}

\citet{nesterov2018lectures} constructed the following hard instance for smooth, strongly-convex functions: 
\begin{align}
	\fsc(\vx) 
	:= \frac{\mu(\kappa-1)}{8}\left((x_1-1)^2+\sum_{i=1}^\infty (x_i-x_{i+1})^2\right) + \frac{\mu}{2} \norm{\vx}_2^2.
	\label{equ:nesterov}
\end{align}
Equivalently, $\fsc(\vx) = \frac{\mu(\kappa-1)}{8} (\vx^{\top} \Asc \vx -2x_1+ 1) + \frac{\mu}{2} \norm{\vx}_2^2$ for the tri-diagonal matrix $\Asc$ given by 
\begin{align} \label{equ:def_A_nest}
	\Asc := 
	\begin{pmatrix}
		2&-1&  \\
		-1&2&-1\\
		&-1&2&\ddots\\
		&&\ddots&\ddots&\ddots
	\end{pmatrix}.
\end{align}
It is straightforward to verify that $\Asc$ is positive semi-definite and $\norm{\Asc}_2\le 4$. Hence, $\fsc$ is $L$-smooth and $\mu$-strongly-convex for $L:=\mu\kappa$. Importantly, if $\supp(\vx)\subset \{1,\ldots,i-1\}$, we can verify:
\begin{align*}
    \supp\left(\nabla \fsc(\vx)\right)=\supp\left( \Asc\vx \right)\cup \supp\left( \vx \right)\cup\{1\}\subset\{1,\ldots,i\},
\end{align*}
where we use the fact that $\supp\left( \Asc\vx \right)\subset\{1,\ldots,i\}$ because $\Asc$ is a tri-diagonal matrix. Hence, by Definition~\ref{def:zero-chain}, $\fsc$ is a zero-chain. Based on the general strategy above, it suffices to lower bound $\fsc(\vx)-\fsc(\vx^\ast)$ when fewer than $T$ coordinates are non-zero. Note that the minimizer $\vx^{\ast}$ of $\fsc$ is given by $x^\ast_i=q^i$ where $q = \frac{\sqrt{\kappa}-1}{\sqrt{\kappa}+1}$. Then with fewer than $T$ gradient oracles, for every $t\ge T$, the $t$-th coordinate of $\vx$ is still zero. Therefore, 
\begin{align*}
	\fsc(\vx)-\fsc(\vx^\ast)\ge \frac{\mu}{2}\norm{\vx-\vx^\ast}_2^2\ge\frac{\mu}{2}\sum_{i=T}^\infty (x_i^\ast)^2\ge \frac{\mu}{2}q^{2T}\norm{\vx^0-\vx^\ast}_2^2.
\end{align*}
Hence, to find a solution satisfying $\fsc(\vx)-\fsc(\vx^\ast)\le\epsilon$, we need gradient complexity $T\ge \widetilde{\Omega}(\sqrt{\kappa})$.

\subsection{Smooth nonconvex minimization}

In order to prove the lower bound for smooth nonconvex minimization, \citet{carmon2019lower} constructed the following unscaled function $\fnc:\R^T\to\R$:
\begin{align}
    \fnc(\vx) := -\Psi(1)\Phi(x_1)+\sum_{i=2}^T \left[\Psi(-x_{i-1})\Phi(-x_i)-\Psi(x_{i-1})\Phi(x_i)\right],
    \label{equ:carmon}
\end{align}
where the component functions are
\begin{align*}
    \Psi(x) := \left\{
		\begin{array}{lr}
			0 & x\le 1/2\\
			\exp{\left(1-\frac{1}{(2x-1)^2}\right)} & x>1/2
		\end{array}
		\right. \quad \text{and} \quad 
	\Phi(x) := \sqrt{e}\int_{-\infty}^x e^{-\half t^2}dt.
\end{align*}
 We enumerate all relevant properties of $\Phi$ and $\Psi$ used in the analysis in the following lemma.

\begin{lemma}[{\citep[Lemma 1]{carmon2019lower}}] The functions $\Phi$ and $\Psi$ satisfy
\begin{enumerate}[i.]
    \item For all $x\le 1/2$ and $k\in\mathbb{N}$, $\Psi^{(k)}(x)=0$. 
    \item For all $x\ge 1$ and $|y|<1$, $\Psi(x)\Phi'(y)>1$.
    \item Both $\Psi$ and $\Phi$ are infinitely differentiable. For all $k\in\mathbb{N}$, we have
    \begin{align*}
        \sup_{x}|\Psi^{(k)}(x)|\le \exp{\left(\frac{5k}{2}\log(4k)\right)} \quad\text{ and }\quad \sup_{x}|\Phi^{(k)}(x)|\le \exp{\left(\frac{3k}{2}\log\frac{3k}{2}\right)}.
    \end{align*}
    \item The functions and derivatives $\Psi$, $\Psi'$, $\Phi$, $\Phi'$ are non-negative and bounded, with
    \begin{align*}
        0<\Psi<e,0<\Psi'<\sqrt{54/e},0<\Phi<\sqrt{2\pi e},0<\Phi'<\sqrt{e}.
    \end{align*}
\end{enumerate}
    \label{lem:properties_phi_psi}
\end{lemma}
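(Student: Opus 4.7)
The plan is to verify each of the four properties separately by direct calculus, pulling out only the standard facts about the bump-function construction and Hermite polynomials. Properties (i), (ii), (iv) reduce to routine one-variable estimates, while (iii) requires the only substantive bookkeeping.

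First I would dispose of (i) by the classical bump-function argument. Set $g(t)=e^{-1/t^{2}}$ extended by $g(0)=0$ for $t\le 0$. By induction one shows $g^{(k)}(t)=P_{k}(1/t)e^{-1/t^{2}}$ on $t>0$ for some polynomial $P_{k}$, and since polynomials are dominated by $e^{1/t^{2}}$ as $t\downarrow 0$, every one-sided derivative at $0$ vanishes; so $g\in C^{\infty}$ with all derivatives zero at $0$. Now $\Psi(x)=e\cdot g(2x-1)$ on $x>1/2$ and $\Psi(x)=0$ on $x\le 1/2$, so the chain rule gives $\Psi^{(k)}(x)=0$ for $x\le 1/2$.

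For (ii) I would note that $(2x-1)^{2}\ge 1$ when $x\ge 1$, hence $1-1/(2x-1)^{2}\ge 0$ and $\Psi(x)\ge e^{0}=1$. Direct differentiation under the integral yields $\Phi'(y)=\sqrt{e}\,e^{-y^{2}/2}$, which for $|y|<1$ strictly exceeds $\sqrt{e}\,e^{-1/2}=1$. Multiplying gives $\Psi(x)\Phi'(y)>1$. For the positivity/boundedness in (iv), the bounds $0<\Phi<\sqrt{2\pi e}$ and $0<\Phi'\le\sqrt{e}$ are immediate from the Gaussian integral and $\Phi'(y)=\sqrt{e}\,e^{-y^{2}/2}$. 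For $\Psi$ one has $0<\Psi(x)<e^{1}=e$ since $1/(2x-1)^{2}>0$ on $x>1/2$ and the exponent tends to $1$ as $x\to\infty$. For $\Psi'$ I would compute $\Psi'(x)=\Psi(x)\cdot 4/(2x-1)^{3}$, substitute $u=2x-1$, maximize $e^{1-1/u^{2}}\cdot 4/u^{3}$ by setting the derivative of $1-1/u^{2}-3\log u$ to zero, obtaining the critical point $u=\sqrt{2/3}$, and plugging back to get the maximum $\sqrt{54/e}$.

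The main obstacle is (iii), the dimension-free derivative bounds, which I expect to be the only technical step. For $\Phi$ I would use the Hermite-polynomial representation $\frac{d^{n}}{dx^{n}}e^{-x^{2}/2}=(-1)^{n}H_{n}(x)e^{-x^{2}/2}$, so $\Phi^{(k)}(x)=\sqrt{e}\,(-1)^{k-1}H_{k-1}(x)e^{-x^{2}/2}$, and invoke the standard uniform bound $\sup_{x}|H_{n}(x)e^{-x^{2}/2}|\le C\sqrt{n!}$; Stirling then converts $\sqrt{(k-1)!}$ into $\exp((3k/2)\log(3k/2))$ with room to spare. For $\Psi$, writing $\Psi(x)=\exp\bigl(h(x)\bigr)$ with $h(x)=1-(2x-1)^{-2}$, I would apply Faà di Bruno's formula to bound $\Psi^{(k)}(x)$ by a sum over partitions of $k$ of products of $\Psi(x)\le e$ times derivatives of $h$. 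Since $h^{(j)}(x)=c_{j}(2x-1)^{-(j+2)}$ with $|c_{j}|\le (j+1)!\cdot 2^{j+1}$ and the problematic region $x$ slightly above $1/2$ is damped by the $\exp(-1/(2x-1)^{2})$ factor, the resulting sum can be bounded by $\sup_{u>0}e^{-1/u^{2}}u^{-N}$ times combinatorial factors; optimizing in $u$ saves an extra factorial and the total fits under $\exp((5k/2)\log(4k))$. The only real work is keeping track of the combinatorial constants in Faà di Bruno and matching them to the stated envelopes via Stirling, which I expect to be the most delicate accounting in the proof.
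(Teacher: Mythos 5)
The paper offers no proof of this lemma---it is imported verbatim from \citet[Lemma 1]{carmon2019lower}---and your verification follows essentially the same elementary route as the proof in that reference: the classical bump-function argument for (i), the pointwise bounds $\Psi(x)\ge 1$ and $\Phi'(y)=\sqrt{e}\,e^{-y^2/2}>1$ for (ii), the explicit optimization giving $\sup\Psi'=\sqrt{54/e}$ for (iv), and Gaussian-derivative (Hermite/Cram\'er) plus Fa\`a di Bruno estimates for (iii). The only caveats are minor: the suprema of $\Phi'$ and $\Psi'$ are in fact attained (so the strict inequalities in the restatement should be read as $\le$, as in the original source), and your combinatorial bookkeeping for the $\Psi^{(k)}$ bound in (iii) is only sketched, though the envelope $\exp\left(\tfrac{5k}{2}\log(4k)\right)$ is loose enough to absorb the partition sums you describe.
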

Note that $\Psi(0)=\Psi'(0)=0$ by Lemma~\ref{lem:properties_phi_psi}.\textit{i}. Then it is  to verify that $\frac{\partial \fnc(\vx)}{\partial x_i}=0$ if $x_i=x_{i-1}=0$. Therefore, if $\supp(\vx)\subset\{1,\ldots,i-1\}$, i.e., $x_j=0$ for all $j\ge i$, we have $\frac{\partial \fnc(\vx)}{\partial x_j}=0$ for all $j\ge i+1$. Hence, $\supp(\nabla \fnc)\subset \{1,\ldots ,i\}$, which implies $\fnc$ is a zero-chain. Define $x_0\equiv1$ for simplicity. As long as the algorithm has not reached the end of the chain, there must be a phase transition point $1\le k\le T$ such that $|x_k|<1$ and $|x_{k-1}|\ge 1$. Using Lemma~\ref{lem:properties_phi_psi}.\textit{ii}, one can bound $\norm{\nabla \fnc(\vx^t)}_2\ge \abs{\frac{\partial \fnc(\vx)}{\partial x_k}}>1$.
Following the general strategy discussed above, by appropriately rescaling $\fnc$ so that it meets the requirement of the function class of interest, \citet{carmon2019lower} derived a lower bound of $T_{\text{nc}}:= \Omega\left(1/\epsilon^2\right)$ gradient oracles.

\section{Our construction}
\label{sec:our_construction}

The hard examples discussed in the previous section show that one can easily construct an additive lower bound by making the objective as the sum of two functions $\fsc$ and $\fnc$. However, if we would like to improve the lower bound by a multiplicative factor in the condition number, it is far from obvious how one should compose $\fsc$ with $\fnc$. We describe our approach in the subsections below.

\subsection{Construction of the hard instance in the deterministic setting}
\label{sec:deterministic_construction}
Before discussing the details of our construction to prove Theorem~\ref{thm:lb-nsc}, we first highlight the main difficulty and our approach at a high level to provide insight and intuition. First, we need to pinpoint the main difficulty.

\paragraph{The main difficulty.}
Given the aforementioned lower bound strategy, we need to find a zero-chain $\ft\in\cF(L, \mu, \Delta)$ as our hard instance. A natural way to construct $\ft$ is to combine the ideas from the hard instances in \citep{nesterov2018lectures} and \citep{carmon2019lower}. The main challenge is how to find a good way of combination such that the two components do not interfere with each other's essential properties and that their strengths can be exploited multiplicatively to contribute to the lower bound. 

\begin{figure}[t]
    \centering
    \includegraphics[trim= 0cm 6cm 0cm 0cm, clip, width=\textwidth]{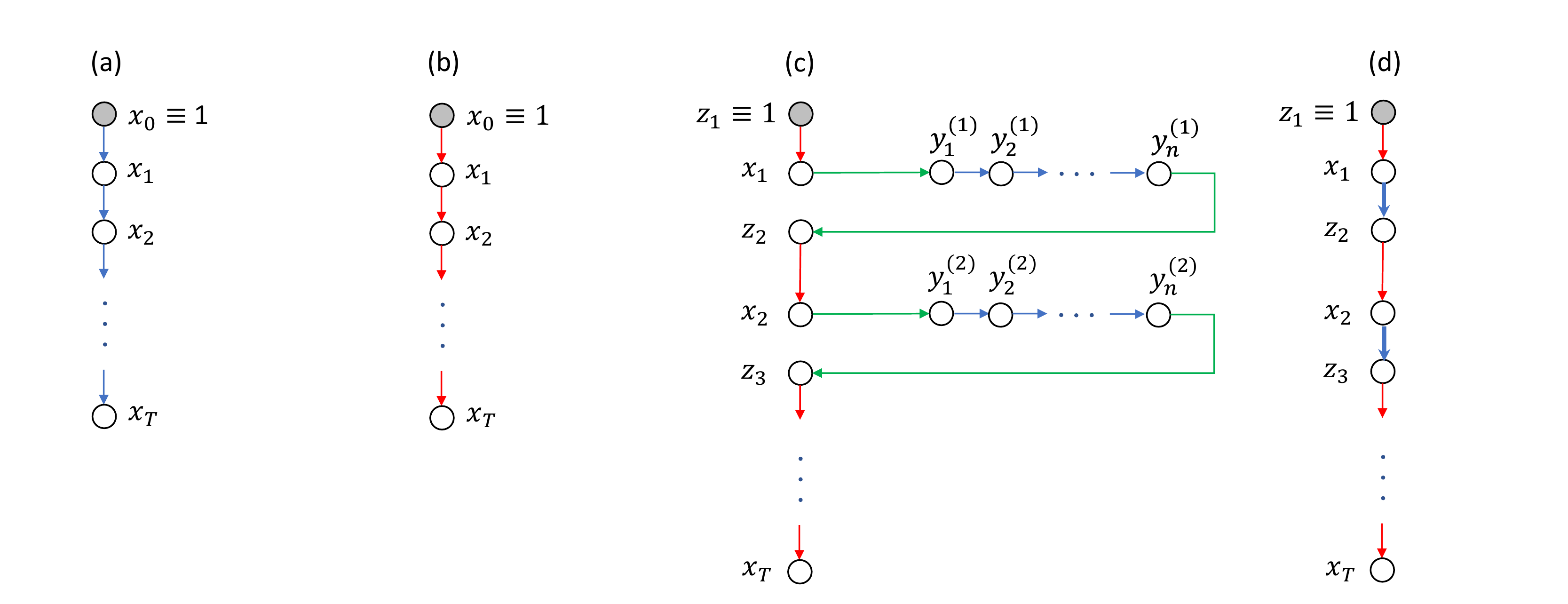}
    \caption{Chains (a, b, c, d) correspond to $\fsc,\fnc,\ft,\fmt$, respectively.  Arrows in different colors represent different types of connections between two variables. For example, a blue arrow from $u$ to $v$ corresponds to a term with $(u-v)^2$ in the function. A red arrow corresponds to $\Psi(-u)\Phi(-v)-\Psi(u)\Phi(v)$. Green and bold blue arrows are quadratic connections that need to be carefully designed. The effective length of a bold blue arrow is $n$ and that of all other arrows is $1$. }
    \label{fig:chain_plot}
\end{figure}

\paragraph{ The key idea.}
The novel combination structure we propose is illustrated in Figure~\ref{fig:chain_plot}. The body of the chain is like $\fnc$~\eqref{equ:carmon}, which contains the minimization variables $\vx, \vz$. Then we insert $T-1$ $\fsc$-style~\eqref{equ:nesterov} sub-chains with length $n$, which contain the maximization variable $\vy$, into the main chain in the way shown in Figure~\ref{fig:chain_plot}(c). Figure~\ref{fig:chain_plot}(d) shows the function $\fmt(\cdot) := \max_{\vy}\ft(\cdot;\vy)$ obtained by maximizing $\vy$. Note that the effictive length of chain (d) is $\cO(nT)=\cO(\sqrt{\ky}T)$, for the largest allowed choice of $n = O(\sqrt{\ky})$. Therefore, suppose we can show that $\fmt$ shares similar properties to $\fnc$ used in \citep{carmon2019lower}, we obtain a lower bound of $\Omega\left(\sqrt{\ky}T_{\text{nc}}\right) = \Omega\left(\sqrt{\ky}/\epsilon^2\right)$. 

However, the requirement that $\fmt$ behaves similarly to $\fnc$ poses another challenge as it basically means that the red arrows in chain (d) dominate the bold blue ones, although their numbers are roughly equal. To overcome this challenge, we carefully design the green arrows directly and the bold blue arrows indirectly. We also need to restrict $n$ such that $n= \cO(\sqrt{\ky})$.

\paragraph{Construction of the hard instance.}
The formal expression of $\ft:(\R^{T}\times\R^{T-1})\times \R^{n(T-1)}\to \R$ is given by 
\begin{align}
	\ft(\vx,\vz;\vby) = -\Psi(1)\Phi(x_1) &+ \sum_{i=2}^{T}\left[\Psi(-z_i)\Phi(-x_i)-\Psi(z_i)\Phi(x_i)\right]\nonumber\\
	&\quad +\sum_{i=1}^{T-1}\gd(x_i, z_{i+1};\vby^{(i)})+\sum_{i=1}^{T-1}\left[c_1 x_i^2 + c_2z_{i+1}^2\right],
	\label{equ:unscaled_instance}
\end{align}
where $\vx,\vz$ are the minimization variables and $\vby=[\vby^{(1)},\ldots,\vby^{(T-1)}]$, where each $\vby^{(i)}\in\R^n$, is the maximization variable. The last term of~\eqref{equ:unscaled_instance} indirectly affects the bold blue arrows in Figure~\ref{fig:chain_plot}(d), where $c_1$ and $c_2$ are some parameters (not necessarily nonnegative) we choose later to obtain a much simpler expression for bold blue arrows to analyze. The function $\gd: (\R\times \R)\times \R^n\to \R$ is defined as
\begin{align}
 \gd(x,z;\vy) := &-\frac{1}{2}\sum_{i=1}^{n-1} (y_i-y_{i+1})^2-\frac{1}{2n^2}\norm{\vy}_2^2+\sqrt{\frac{C}{n}}\left(xy_1-\half zy_n\right)\nonumber
 \\
 =&-\frac{1}{2}\vy^\top \left(\frac{1}{n^2} I_n+A\right)\vy+ \sqrt{\frac{C}{n}}\vb_{x,z}^\top \vy,
\end{align} 
where $C>0$ is a large enough numerical constant specified later and $\vb_{x,z}:= x\ve_1-\half z\ve_n$. Here the term $\sqrt{\frac{C}{n}}\vb_{x,z}^\top \vy$ characterizes the green arrows in Figure~\ref{fig:chain_plot}, where the $\sqrt{\frac{C}{n}}$ factor ensures $\gd_m(x,z):= \max_{\vy\in\R^n} \gd(x,z;\vy)$ has an $\cO(1)$ dependence on $n$ as shown in Lemma~\ref{lem:quadratic_max}. The matrix $A\in\R^{n\times n}$ is defined as
\begin{align}
	A:=\begin{pmatrix}
	1&-1&  \\
	-1&2&-1\\
	&-1&\ddots&\ddots\\
	&&\ddots&2&-1\\
	&&&-1&1
	\end{pmatrix}, 
	\label{equ:def_A}
\end{align}
which is the finite version of~\eqref{equ:def_A_nest} in \cite{nesterov2018lectures}'s instance except that its $(1,1)$-th and $(n,n)$-th entries are $1$. This change is necessary in our example.

Note that for fixed scalar variables $x$ and $z$, $\gd(x,z;\cdot)$ is $\frac{1}{n^2}$-strongly-concave. Also, $\ft$ is $\Omega(1)$-smooth (Lemma~\ref{lem:bd_smt}). Therefore, $\ky=\Omega(n^2)$, implying we should choose $n=\cO(\sqrt{\ky})$.

We can compute
\begin{align}
	\gd_m(x,z):= \max_{\vy\in\R^n} \gd(x,z;\vy)=\frac{C}{2n}\vb_{x,z}^\top \left(\frac{1}{n^2} I_n+A\right)^{-1} \vb_{x,z}.
	\label{equ:gm}
\end{align}
The following lemma shows that $\gd_m$ actually has a much simpler expression.

\begin{lemma}
\label{lem:quadratic_max}
	Suppose $n\ge10$, we have
	 \begin{align}
		\gd_m(x,z)=C\left(\frac{a_1}{2}x^2-\half a_2xz+\frac{a_1}{8} z^2\right),
	\end{align}
	where $a_1,a_2>0$ are "almost" numerical constants. That is to say, although $a_1,a_2$ depends on $n$, we have $0<d_1\le a_1\le f_1$ and $0<d_2\le  a_2\le f_2$ and $d_1,d_2,f_1,f_2$ are numerical constants.
\end{lemma}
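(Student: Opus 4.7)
The plan is to reduce $\gd_m$ to a pure quadratic in $(x,z)$ and evaluate its coefficients in closed form. Setting $M := \tfrac{1}{n^2} I_n + A$, equation~\eqref{equ:gm} gives $\gd_m(x,z) = \frac{C}{2n}\vb_{x,z}^\top M^{-1}\vb_{x,z}$. Because $A$ (and hence $M$) is invariant under flipping rows and columns about the anti-diagonal, we have $(M^{-1})_{11} = (M^{-1})_{nn}$. Writing $\alpha := (M^{-1})_{11}$ and $\beta := (M^{-1})_{1n}$ and expanding $\vb_{x,z} = x\ve_1 - \tfrac{1}{2} z \ve_n$ immediately gives
\begin{align*}
\gd_m(x,z) = C\left[\frac{\alpha}{2n}x^2 - \frac{\beta}{2n}xz + \frac{\alpha}{8n}z^2\right],
\end{align*}
so the lemma reduces to showing that $a_1 := \alpha/n$ and $a_2 := \beta/n$ are positive and bounded above by universal constants whenever $n \ge 10$.

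To evaluate $\alpha$ and $\beta$ I would solve the tridiagonal system $M\vu = \ve_1$ by a hyperbolic ansatz. Let $\phi > 0$ be defined by $2\sinh(\phi/2) = 1/n$, equivalently $2\cosh\phi = 2 + 1/n^2$. The interior three-term recurrence $-u_{i-1} + (2 + 1/n^2) u_i - u_{i+1} = 0$ has characteristic roots $e^{\pm \phi}$, so one family of solutions is $u_i = K \cosh\bigl((n + \tfrac{1}{2} - i)\phi\bigr)$. Using the identity $(2\cosh\phi - 1)\cosh(\phi/2) = \cosh(3\phi/2)$ one verifies that this ansatz automatically satisfies the right boundary equation $(1 + 1/n^2) u_n = u_{n-1}$; using in addition $\cosh((n + \tfrac{1}{2})\phi) - \cosh((n - \tfrac{1}{2})\phi) = 2\sinh(n\phi)\sinh(\phi/2)$ the left boundary equation $(1 + 1/n^2)u_1 - u_2 = 1$ forces $K = n/\sinh(n\phi)$. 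Reading off the first and last components gives
\begin{align*}
\alpha = \frac{n \cosh\bigl((n - \tfrac{1}{2})\phi\bigr)}{\sinh(n\phi)}, \qquad \beta = \frac{n \cosh(\phi/2)}{\sinh(n\phi)}.
\end{align*}

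It remains to bound $a_1$ and $a_2$ for $n \ge 10$. The key observation is that $\phi = 2\sinh^{-1}\!\bigl(1/(2n)\bigr)$ is $\Theta(1/n)$ with $n\phi$ pinched near $1$: applying $x - x^3/6 \le \sinh^{-1}(x) \le x$ to $x = 1/(2n)$ yields $1 - 1/(24n^2) \le n\phi \le 1$, and hence $(n - \tfrac{1}{2})\phi$ and $\phi/2$ lie in explicit compact intervals. Plugging these envelopes into the closed forms for $\alpha$ and $\beta$ and using the monotonicity and strict positivity of $\cosh$ and $\sinh$ on the relevant ranges produces explicit positive constants $d_1 \le a_1 \le f_1$ and $d_2 \le a_2 \le f_2$, with $a_1 \to \coth(1)$ and $a_2 \to 1/\sinh(1)$ as $n \to \infty$. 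The main subtlety I anticipate is choosing the right ansatz: the shift $+\tfrac{1}{2}$ inside $(n + \tfrac{1}{2} - i)\phi$ is precisely what makes the right boundary drop out automatically, which is necessary because both the $(1,1)$ and $(n,n)$ diagonal entries of $A$ equal $1$ (not $2$), so neither endpoint of the chain is ``free''.
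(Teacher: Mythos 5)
Your proof is correct and reaches the same conclusion as the paper, but by a genuinely different route. The paper (via its Lemma~\ref{lem:matrix_inverse}) computes $B_{i,1}$ for $B = (\tfrac{1}{n^2}I_n + A)^{-1}$ through the cofactor formula $B = M^\top/\det(\tfrac{1}{n^2}I_n + A)$, evaluating both the determinant and the cofactors $M_{1,i}$ by the standard three-term recurrence for tridiagonal determinants in terms of the roots $p,q$ of $r^2 - (2+\alpha)r + 1 = 0$; it then extracts the crude but sufficient bounds $0.1 \le B_{i,1}/n \le 20$ uniformly in $i$ by controlling $D = p^{n-1}$. You instead solve the linear system $M\vu = \ve_1$ directly with the hyperbolic ansatz $u_i = K\cosh\bigl((n + \tfrac{1}{2} - i)\phi\bigr)$, where $2\cosh\phi = 2 + 1/n^2$; the half-integer shift is exactly what makes the right boundary row of $M$ (with the modified corner entry $1 + 1/n^2$ rather than $2 + 1/n^2$) hold automatically, and the left boundary then pins $K = n/\sinh(n\phi)$. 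This yields clean closed forms $a_1 = \cosh((n - \tfrac12)\phi)/\sinh(n\phi)$ and $a_2 = \cosh(\phi/2)/\sinh(n\phi)$, from which uniform positive upper and lower bounds follow by the envelope $1 - 1/(24n^2) \le n\phi \le 1$. Both arguments are valid; yours buys much tighter constants (identifying the $n\to\infty$ limits $\coth 1$ and $1/\sinh 1$, versus the paper's $[0.1, 20]$) at the cost of a slightly cleverer ansatz, and also has the advantage of producing all the entries $B_{i,1}$ needed by the paper's Lemma~\ref{lem:eqv_of_gm} if you simply read off $u_i$ for general $i$.
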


Choosing $c_1=C(a_2-a_1)/2$, $c_2=C(a_2-a_1)/8$ and $C=12/a_2$, we obtain
\begin{align*}
	\fmt(\vx,\vz) :=& \max_{\vby\in\R^{n(T-1)}}\ft(\vx,\vz;\vby)\\
	=&-\Psi(1)\Phi(x_1)+\sum_{i=2}^{T}\left[\Psi(-z_i)\Phi(-x_i)-\Psi(z_i)\Phi(x_i)\right]+6\sum_{i=1}^{T-1}\left(x_i-\half z_{i+1}\right)^2.
\end{align*}

We show in the appendix that $\fmt$ shares similar properties to $\fnc$. Then we can prove Theorem~\ref{thm:lb-nsc} by appropriately rescaling $\fmt$ in the same way as done in \citep{carmon2019lower}. The detailed proof is deferred in Appendix~\ref{sec:appendix_deterministic}.

\subsection{Construction of the hard instance in the stochastic setting}

We start this section by discussing why the techniques in \citep{arjevani2019lower} do not directly apply to our construction in the deterministic setting. The following lemma from \citep{arjevani2019lower} shows how to construct a probability-$p$ zero-chain by constructing a stochastic first-order oracle over a given zero-chain.
\begin{lemma}[{\citep[Lemma 3]{arjevani2019lower}}]
	\label{lem:arj-l3}
	Let $f: \cX \to \R$ be a zero-chain on $\cX \subset \R^T$.
	For $\vx \in \cX$, let $i^{\ast}(\vx) := \inf\{i \in [T]: x_i = 0\}$ be the next coordinate to discover. For $p \in (0, 1]$, define
	\begin{align*}
		[\vg(\vx,\xi)]_i := \left\{
			\begin{array}{ll}
				\frac{\xi}{p} \nabla_i f(\vx) & \text{ if } i = i^{\ast}(\vx) \\
				\nabla_i f(\vx) & \text{otherwise,}
			\end{array}\right.
	\end{align*}
	where $\xi \sim \Ber(p)$. Suppose $\norm{\nabla f(\vx)}_\infty \le G$ for all $\vx \in \cX$. Then $O: \vx \mapsto (f(\vx,\vg(\vx,\xi))$ is a stochastic first-order oracle with bounded variance $\sigma^2\le G^2(1-p)/p $. Also, $f$ equipped with $O$ is a probability-$p$ zero-chain.
\end{lemma}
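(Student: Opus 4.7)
The plan is to verify three claims about the oracle $O$: (i) unbiasedness of $\vg(\vx,\xi)$, (ii) the stated variance bound $G^2(1-p)/p$, and (iii) the probability-$p$ zero-chain property. Since $\vg(\vx,\xi)$ differs from $\nabla f(\vx)$ only at the single coordinate $i^*(\vx)$, all three should reduce to elementary computations involving the Bernoulli scalar $\xi \sim \Ber(p)$.

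First I would establish unbiasedness coordinate by coordinate: for $j \neq i^*(\vx)$ the entry $[\vg(\vx,\xi)]_j = \nabla_j f(\vx)$ is deterministic, while for $j = i^*(\vx)$ we have $\E[(\xi/p)\nabla_j f(\vx)] = \nabla_j f(\vx)$ because $\E[\xi] = p$. Stacking coordinates gives $\E[\vg(\vx,\xi)] = \nabla f(\vx)$, which is the unbiasedness required by Definition~\ref{def:stochastic_oracle}.

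For the variance, the only stochastic coordinate is $i^*(\vx)$, so using $\Var(\xi/p) = p(1-p)/p^2 = (1-p)/p$ I would compute
\begin{align*}
    \E\norm{\vg(\vx,\xi) - \nabla f(\vx)}_2^2
    = \left(\nabla_{i^*(\vx)} f(\vx)\right)^2 \Var(\xi/p)
    = \frac{1-p}{p}\left(\nabla_{i^*(\vx)} f(\vx)\right)^2
    \le \frac{(1-p) G^2}{p},
\end{align*}
where the last step uses $|\nabla_{i^*(\vx)} f(\vx)| \le \norm{\nabla f(\vx)}_\infty \le G$. This yields the stated bound $\sigma^2 \le G^2(1-p)/p$.

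For the probability-$p$ zero-chain property, suppose $\supp(\vx) \subset \{1,\dots,i-1\}$. In the relevant case arising along a zero-respecting algorithm's trajectory starting from $\vx^0 = \0$, the support is always a prefix and $i^*(\vx) = i$. Since $f$ is a (deterministic) zero-chain, $\supp(\nabla f(\vx)) \subset \{1,\dots,i\}$, so for every $j > i$ the entry $[\vg(\vx,\xi)]_j = \nabla_j f(\vx) = 0$ almost surely; this verifies the second requirement $\pr(\supp(\vg(\vx,\xi)) \subset \{1,\dots,i\}) = 1$. For the first requirement, the event $\supp(\vg(\vx,\xi)) \not\subset \{1,\dots,i-1\}$ can only occur through coordinate $i = i^*(\vx)$, whose entry $(\xi/p)\nabla_i f(\vx)$ is nonzero only when $\xi = 1$; this has probability exactly $p$. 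The main subtlety I anticipate is that the definition of probability-$p$ zero-chain is quantified over all $\vx$, whereas the above argument is cleanest for $\vx$ whose support is a prefix. This is not genuinely restrictive because the lemma is only used downstream via Lemma~\ref{lem:arj-l1}, whose application chain already lives on iterates with prefix support; if one prefers a global statement, the same algebra still bounds $\pr(\supp(\vg) \not\subset \{1,\dots,i-1\})$ by $p$ after noting that any deterministic nonzero entry of $\nabla f(\vx)$ in $\{i^*(\vx)+1,\dots,i\}$ is already captured by the zero-chain containment $\supp(\nabla f(\vx)) \subset \{1,\dots,i\}$.
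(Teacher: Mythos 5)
The paper imports this statement verbatim from \citet{arjevani2019lower} and gives no proof of its own, so there is nothing internal to compare your argument against; I will just assess the argument on its merits.

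Your computations for unbiasedness and variance are correct and complete: the difference $\vg(\vx,\xi)-\nabla f(\vx)$ is supported on the single coordinate $i^*(\vx)$, so $\E\norm{\vg-\nabla f}_2^2=(\nabla_{i^*(\vx)} f(\vx))^2\Var(\xi/p)=(1-p)/p\cdot(\nabla_{i^*(\vx)} f(\vx))^2\le G^2(1-p)/p$, and unbiasedness follows from $\E[\xi/p]=1$. The second requirement of the probability-$p$ zero-chain property also goes through for all $\vx$: $\supp(\vg(\vx,\xi))\subset\supp(\nabla f(\vx))\subset\{1,\ldots,i\}$ deterministically, since every entry of $\vg$ is a scalar multiple of the corresponding entry of $\nabla f(\vx)$.

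The gap is in the ``global statement'' hedge at the end. You correctly observe that the clean argument requires $i^*(\vx)=i$, but then assert that the general case is handled because nonzero deterministic entries in $\{i^*(\vx)+1,\ldots,i\}$ are ``already captured'' by the containment $\supp(\nabla f(\vx))\subset\{1,\ldots,i\}$. That containment does not help: the problematic coordinate is $i$ itself, not the interior range. Concretely, if $\supp(\vx)=\{1,3\}$ (so $i^*(\vx)=2$) and one takes $i=4$, the zero-chain property gives only $\supp(\nabla f(\vx))\subset\{1,2,3,4\}$ and in general does not force $\nabla_4 f(\vx)=0$; since $4\ne i^*(\vx)$, the oracle reveals $\vg_4=\nabla_4 f(\vx)$ deterministically, so $\pr\bigl(\supp(\vg)\not\subset\{1,2,3\}\bigr)$ can equal $1$, violating the $\le p$ bound. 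So the claim as quantified over all $\vx$ needs either the assumption that $\supp(\vx)$ is a prefix (equivalently, that $i^*(\vx)-1=\max\supp(\vx)$), or a redefinition of $i^*$ as the next coordinate past $\max\supp(\vx)$; note that the iterates of a zero-respecting algorithm can genuinely have gapped support, so simply appealing to the downstream application does not fully discharge the issue without one of these fixes. Your first two parts stand; the third should be restated with the prefix-support hypothesis (or the alternate definition of $i^*$) made explicit and the heuristic final sentence removed.
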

Therefore, a tempting way to obtain our hard instance in the stochastic setting is to construct a probability-$p$ zero-chain as in Lemma~\ref{lem:arj-l3} directly over $\ft$. However, as suggested by Lemma~\ref{lem:arj-l3}, to ensure the variance of the oracle is bounded, we require $\norm{\nabla \ft(\vx,\vz;\vby)}_\infty\le G$ for some bounded $G<\infty$. Actually we need $G= \cO(1)$ to obtain a nontrivial lower bound. However, $\ft$ has an unconstrained quadratic component whose gradient is unbounded over the whole space. Therefore, we have to restrict its domain to be a bounded hypercube. However, it turns out to be impossible to find any hypercube such that $\norm{\nabla \ft(\vx,\vz;\vby)}_\infty\le \cO(1)$ without losing the properties of $\fmt$. To overcome this difficulty, we need to carefully modify the quadratic components in $\ft$, i.e., the blue and green arrows in Figure~\ref{fig:chain_plot}(c).

Formally, define the hypercube $\cC^d_R\subset\R^d$ as 
\begin{align*}
    \cC^d_R= \{\vx\in\R^d\mid \norm{\vx}_\infty\le R\}.
\end{align*}
Our hard instance in the stochastic setting, $\fts:(\bcX\times \bcZ)\times \bcY\to \R$, where $R_1$ and $R_2$ are postive numerical constants to be set later, is given by 
\begin{align}
	\fts(\vx,\vz;\vby) = -\Psi(1)\Phi(x_1) &+ \sum_{i=2}^{T}\left[\Psi(-z_i)\Phi(-x_i)-\Psi(z_i)\Phi(x_i)\right]\nonumber\\
	&\quad +\sum_{i=1}^{T-1}\gs(x_i, z_{i+1};\vby^{(i)})+\sum_{i=1}^{T-1}\left[c_1 x_i^2 + c_2z_{i+1}^2\right],
	\label{equ:unscaled_instance_sg}
\end{align}
where
\begin{align}
    \gs(x, z;\vy):=&\frac{C}{n}\left[-\frac{1}{2}\vy^\top \left(\frac{1}{n^2} I_n+A\right)\vy+ \vb_{x,z}^\top \vy\right].
    \label{equ:gs}
\end{align}
Here $C,c_1,c_2,\vb_{x,z}$ are the same as those in \eqref{equ:unscaled_instance}. Also, we choose $R_2\ge 30 R_1$ and $R_1\ge2$. Different from $\gd$, $\gs$ is $C/n^3$ stongly concave, which implies we should choose $n=\cO({\ky}^{1/3})$. We will show in Lemma~\ref{lem:eqv_of_gm} that 
\begin{align*}
    \gs_m(x, z):=\max_{\vy\in \cC^{n}_{nR_2} }\gs(x, z;\vy)= \gd_m(x, z),
\end{align*}
where $\gd_m$ is the quadratic function defined in ~\eqref{equ:gm}. 
Therefore for all $\vx\in\bcX$ and $\vz\in\bcZ$,
\begin{align*}
    \fmts(\vx,\vz)=\fmt(\vx,\vz).
\end{align*}
That is, the function after maximizing over $\vby$ has the same expression as before. 

It is also straightforward to verify that $\norm{\nabla \ft(\vx,\vz;\vby)}_\infty\le \cO(1)$ (Lemma~\ref{lem:bd_smt_sg}.\textit{iv}). Then we are able to construct a probability-$p$ zero-chain as in Lemma~\ref{lem:arj-l3} over the rescaled version of $\ft$. According to Lemma~\ref{lem:arj-l3}, to ensure the variance of the rescaled stochastic oracle is bounded by $\sigmav^2$, $p$ has to be at least $\Omega(\epsilon^2/\sigma^2)$. Then we obtain a lower bound for the stochastic setting of $$\Omega\left(\frac{nT_{\text{nc}}}{p}\right)=\Omega\left(\frac{\ky^{1/3}}{\epsilon^4}\right).$$
Note that the deterministic lower bound is $\Omega(\frac{\sqrt{\ky}}{\epsilon^2})$ which is a special case of the stochastic setting. Therefore we derive a lower bound of 
	\begin{align*}
	    \Omega\left(\max\left\{\frac{\sqrt{\ky}}{\epsilon^2},\frac{\ky^{1/3}\sigma^2}{\epsilon^4}\right\}\right)=\Omega\left(\frac{\sqrt{\ky}}{\epsilon^2}+\frac{\ky^{1/3}}{\epsilon^4}\right).
	\end{align*}
Detailed analyses are deferred to Appendix~\ref{sec:appendix_stochastic}.

\section{Conclusion and discussion}

In this paper, we proved lower bounds on both the deterministic and the stochastic oracle complexities of nonconvex-strongly-concave min-max optimization for first-order zero-respecting algorithms. Our lower bound in the deterministic setting matches the existing upper bound \citep{lin2020nearoptimal} up to log factors. However, there is still a gap between our lower bound and the upper bound in \citep{lin2020gradient} in the stochastic setting. How to close this gap is an open question. Apart from this, several other questions are worth consideration. 

First, one immediate next step is to check if the proposed lower bounds hold for arbitrary, potentially randomized algorithms. We believe that the results are likely to hold but may introduce unexpected complications. 
Second, so far we have focused on nonconvex-strongly-concave min-max optimization. However, it remains open what the tight lower bound is in the more general nonconvex-concave min-max optimization. Moreover, what about nonconvex-nonconcave min-max optimization? To answer this question, a good measure of the suboptimality is a prerequisite. 
Last but not least, we only consider first-order oracles. It is also interesting to obtain a lower bound for functions with higher-order smoothness and oracles.

\bibliographystyle{plainnat}
\bibliography{reference}

\clearpage
\appendix
\section{Useful lemma}
We first present a lemma useful for analyzing the quadratic components in our examples.
\begin{lemma}
    Denote $\alpha=\frac{1}{n^2}$ and let $B=\left(\alpha I_n+A\right)^{-1}$ where $A$ is the matrix defined in~\eqref{equ:def_A}. If $n\ge10,$ we have for all $1\le i\le n$,
    \begin{align*}
        0.1n\le B_{i,1}\le 20 n.
    \end{align*}
    \label{lem:matrix_inverse}
\end{lemma}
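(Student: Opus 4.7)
The plan is to compute $B_{i,1}$ explicitly as the $i$-th coordinate of the vector $\vu \in \R^n$ solving $(\alpha I_n + A)\vu = \ve_1$, so that $B_{i,1} = u_i$. Writing out this system unfolds into a tridiagonal recurrence with two asymmetric boundary conditions:
\[
    (1+\alpha)\,u_1 - u_2 = 1, \qquad -u_{i-1} + (2+\alpha)\,u_i - u_{i+1} = 0 \text{ for } 2 \le i \le n-1, \qquad -u_{n-1} + (1+\alpha)\,u_n = 0.
\]
The Robin-type right boundary comes exactly from the modification of $A_{n,n}$ to $1$ noted after the definition of $A$, and this is what makes the answer clean.

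\textbf{Closed form via the characteristic equation.} I would first diagonalize the bulk recurrence via the characteristic polynomial $r^2 - (2+\alpha) r + 1 = 0$, whose roots are $e^{\pm \theta}$ with $\theta > 0$ defined by $\cosh\theta = 1 + \alpha/2 = 1 + 1/(2n^2)$. The double-angle identity $\cosh\theta - 1 = 2\sinh^2(\theta/2)$ then gives the exact identity
\[
    \sinh(\theta/2) \;=\; \frac{1}{2n},
\]
which is the key simplification that keeps all subsequent constants clean. Writing $u_i = P e^{i\theta} + Q e^{-i\theta}$ and plugging into the right boundary condition yields $P/Q = e^{-(2n+1)\theta}$, so $u_i$ must be proportional to $\cosh\bigl((n + \half - i)\theta\bigr)$. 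Substituting into the left boundary condition, using $1+\alpha = 2\cosh\theta - 1$ together with the product-to-sum identity $2\cosh\theta\cosh((n-\half)\theta) = \cosh((n+\half)\theta) + \cosh((n-\tfrac{3}{2})\theta)$, produces a cancellation that reduces the bracket to $\cosh((n+\half)\theta) - \cosh((n-\half)\theta) = 2\sinh(n\theta)\sinh(\theta/2)$. Combining with $\sinh(\theta/2) = 1/(2n)$ gives the explicit formula
\[
    B_{i,1} \;=\; u_i \;=\; \frac{n\,\cosh\bigl((n + \half - i)\theta\bigr)}{\sinh(n\theta)}.
\]

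\textbf{Bounding the explicit formula.} Since $\sinh(\theta/2) = 1/(2n)$, a short Taylor-series argument using $x \le \sinh(x) \le x(1 + x^2/6)$ for small $x \ge 0$ shows that $n\theta$ lies in the narrow window $(1 - 1/(24n^2),\, 1]$ for $n \ge 10$. Consequently $\sinh(n\theta) \in (\sinh(0.999),\,\sinh(1)) \subset (1.17,\,1.18)$. As $i$ varies in $\{1,\dots,n\}$, the argument $(n + \half - i)\theta$ ranges over $[\theta/2,\,(n-\half)\theta] \subset [0,\,1]$, so $\cosh\bigl((n+\half-i)\theta\bigr) \in [1,\,\cosh(1)] \subset [1,\,1.55]$. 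Plugging these into the closed form gives $B_{i,1} \in [n/\sinh(1),\; n\cosh(1)/\sinh(1)] \subset [0.85\,n,\,1.32\,n]$, which is comfortably contained in $[0.1\,n,\,20\,n]$ as claimed.

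\textbf{Main obstacle.} The real work is the bookkeeping around the two asymmetric boundary conditions: the nonstandard $A_{n,n} = 1$ turns the right boundary into a Robin condition whose clean closed form (the shift by $\half$ inside the $\cosh$) is not immediately obvious, and one must select the right product-to-sum and difference-of-cosh identities at the left boundary for the extraneous terms to cancel. Once the closed form is in hand, the final bounds reduce to routine estimates on $\sinh$ and $\cosh$ at arguments within $O(1/n^2)$ of $1$.
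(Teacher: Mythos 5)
Your proof is correct, and it takes the same fundamental route as the paper: both reduce $B_{i,1}$ to the three-term recurrence of the tridiagonal matrix with characteristic polynomial $r^2-(2+\alpha)r+1=0$. The paper does this via the cofactor-over-determinant representation $B = M^\top/\det(\alpha I_n + A)$ and evaluates each tridiagonal determinant by the same recurrence in terms of the roots $p,q$; you instead solve $(\alpha I_n+A)\vu=\ve_1$ directly with boundary conditions, which is the identical computation by Cramer's rule. Where you diverge is in the parameterization and consequent bookkeeping: writing $p=e^{\theta}$, $q=e^{-\theta}$ and exploiting the exact identity $\sinh(\theta/2)=1/(2n)$ together with the Robin structure at index $n$ lets you collapse the answer to the single-term closed form $B_{i,1}=n\cosh\bigl((n+\tfrac12-i)\theta\bigr)/\sinh(n\theta)$. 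This is a genuine simplification over the paper's approach of bounding $\det$ and $M_{1,i}$ separately via the crude envelope $2\le p^{n-1}\le 8$; your version reduces to estimating $\cosh$ and $\sinh$ over $[0,1]$ and yields the tighter window $B_{i,1}\in[0.85n,\,1.32n]$, which certainly implies the stated $0.1n\le B_{i,1}\le 20n$. I verified the boundary analysis (the identity $P/Q=e^{-(2n+1)\theta}$ from the right Robin condition, and the cancellation $\cosh((n+\tfrac12)\theta)-\cosh((n-\tfrac12)\theta)=2\sinh(n\theta)\sinh(\theta/2)$ at the left) and the numeric bounds; both are sound.
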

\begin{proof}[Proof of Lemma~\ref{lem:matrix_inverse}]
    Let $M$ be the cofactor matrix of $\alpha I_n+A$. We have
	\begin{align*}
		B=\frac{M^\top}{\det\left(\alpha I_n+A\right)}.
	\end{align*}
	So we only need to compute $\det\left(\alpha I_n+A\right)$ and $M_{1,i}$ for all $1\le i\le n$. Note that all of them are determinants of tridiagonal matrices which can be computed using a three-term recurrence relation \citep{ElMikkawy2004OnTI}. Let
	\begin{align*}
		p=1+\frac{\alpha}{2}+\sqrt{\alpha+\frac{\alpha^2}{4}},\quad q=1+\frac{\alpha}{2}-\sqrt{\alpha+\frac{\alpha^2}{4}}
	\end{align*}
	be the solutions of the following equation
	\begin{align*}
		x^2-(2+\alpha)x+1=0.
	\end{align*}
	 By standard calculations, we have
	\begin{align*}
		\det(\alpha I_n+A)&=\frac{\left(\alpha+\frac{\alpha^2}{2}\right)\left(p^{n-1}-q^{n-1}\right)+\alpha\sqrt{\alpha+\frac{\alpha^2}{4}}\left(p^{n-1}+q^{n-1}\right) }{2\sqrt{\alpha+\frac{\alpha^2}{4}}},\\
		M_{1,i}&=\frac{\frac{\alpha}{2}\left(p^{n-i}-q^{n-i}\right)+\sqrt{\alpha+\frac{\alpha^2}{4}}\left(p^{n-i}+q^{n-i}\right)}{2\sqrt{\alpha+\frac{\alpha^2}{4}}}.
	\end{align*}
	Define $D=p^{n-1}$, $E=D-\frac{1}{D}$, and $F=D+\frac{1}{D}$. We have
	\begin{align*}
	    0\le p^{n-i}-q^{n-i}\le E \text{ and } 2\le p^{n-i}+q^{n-i}\le F.
	\end{align*}
	Therefore
	\begin{align*}
	    	&\det(\alpha I_n+A)=\frac{\left(\alpha+\frac{\alpha^2}{2}\right)E+\alpha\sqrt{\alpha+\frac{\alpha^2}{4}}F }{2\sqrt{\alpha+\frac{\alpha^2}{4}}},\\
		&1\le M_{1,n}= M_{1,i}\le M_{1,1}= \frac{\frac{\alpha}{2}E+\sqrt{\alpha+\frac{\alpha^2}{4}}F}{2\sqrt{\alpha+\frac{\alpha^2}{4}}}.
	\end{align*}
Noting $\alpha=\frac{1}{n^2}$, we have
\begin{align*}
	D=p^{n-1}	=&\left(1+\frac{1}{2n^2}+\frac{1}{n}\sqrt{1+\frac{1}{4n^2}}\right)^{n-1}.
\end{align*}
We can bound $2\le D\le 8$ if $n\ge 10$. Then it is straightforward to upper and lower bound $\det(\alpha I_n+A)$ and $M_{1,i}$ and then obtain the bound of $B_{i,1}$. If $n\ge 10$, we have
	\begin{align*}
		0.1n\le &B_{i,1}\le 20n, \forall 1\le i\le n.
	\end{align*}
\end{proof}

\section{Proofs for deterministic lower bound}
\label{sec:appendix_deterministic}

\begin{proof}[Proof of Lemma~\ref{lem:quadratic_max}]
Let $B=\left(\frac{1}{n^2} I_n+A\right)^{-1}$ where $A$ is the matrix defined in~\eqref{equ:def_A}. By symmetry, we have $B_{1,1}=B_{n,n}$ and $B_{1,n}=B_{n,1}$. Then we have
	\begin{align*}
	    \gd_m(x,z)=\frac{C}{2n}\left(B_{1,1}x^2-B_{n,1}xz+\frac{B_{1,1}}{4}z^2
	    \right).
	\end{align*}
	Let $a_1=B_{1,1}/n$ and $a_2=B_{n,1}/n$. By Lemma~\ref{lem:matrix_inverse} we know $0.1\le a_1,a_2\le 20$ and complete the proof.
\end{proof}

To prove the main theorem, we need several additional lemmas.
The following lemma gives a lower bound of the gradient norm when the algorithm hasn't reached the end of the chain.
\begin{lemma}
	\label{lem:gradient_lower_bound}
	If $|z_i|< 1$ for some $i\le T$, then $\norm{\nabla \fmt(\vx,\vz)}_2>\frac{1}{3}$.
\end{lemma}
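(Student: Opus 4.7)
The plan is to adapt the Carmon--Hinder--Sidford chain argument \citep{carmon2019lower} to our doubled chain by identifying a phase-transition point and invoking the $\Psi\Phi' > 1$ inequality. Let $k \in \{2, \ldots, T\}$ be the smallest index with $|z_k| < 1$, so that $|z_j| \geq 1$ for every $2 \leq j < k$. I would compute only two partials,
\begin{align*}
\partial \fmt / \partial x_{k-1} &= -\Psi(z_{k-1})\Phi'(x_{k-1}) - \Psi(-z_{k-1})\Phi'(-x_{k-1}) + 12\bigl(x_{k-1} - \tfrac{1}{2} z_k\bigr), \\
\partial \fmt / \partial z_k &= -\Psi'(-z_k)\Phi(-x_k) - \Psi'(z_k)\Phi(x_k) - 6\bigl(x_{k-1} - \tfrac{1}{2} z_k\bigr),
\end{align*}
with the understanding that for the boundary case $k = 2$ the first two terms of $\partial \fmt / \partial x_1$ collapse to $-\Psi(1)\Phi'(x_1) = -\Phi'(x_1)$. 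The argument then splits on the size of $|x_{k-1}|$.

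In the easy branch $|x_{k-1}| \geq 1$, the constraint $|z_k| < 1$ forces $|x_{k-1} - \tfrac12 z_k| > \tfrac12$, so the quadratic part of $\partial \fmt / \partial x_{k-1}$ has magnitude exceeding $6$. Meanwhile the $\Psi\Phi'$ part has magnitude at most $e$: for $k \geq 3$ the support property in Lemma~\ref{lem:properties_phi_psi}.\textit{i} kills one of $\Psi(\pm z_{k-1})$, and $|x_{k-1}| \geq 1$ gives $\Phi'(x_{k-1}) = \Phi'(-x_{k-1}) \leq \Phi'(1) = 1$, combined with $\Psi < e$ from Lemma~\ref{lem:properties_phi_psi}.\textit{iv}; the $k = 2$ case is even cleaner since $\Psi(1) = 1$. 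Hence $|\partial \fmt / \partial x_{k-1}| \geq 6 - e > 1/3$, which already implies $\|\nabla \fmt(\vx, \vz)\|_2 > 1/3$.

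In the hard branch $|x_{k-1}| < 1$, I would consider the linear combination $\partial \fmt / \partial x_{k-1} + 2\,\partial \fmt / \partial z_k$: the $2{:}1$ coefficient ratio is dictated by the half-weight on $z_k$ inside $(x_{k-1} - \tfrac12 z_k)^2$ and precisely annihilates the quadratic contributions. What remains is
\[
-\Psi(z_{k-1})\Phi'(x_{k-1}) - \Psi(-z_{k-1})\Phi'(-x_{k-1}) - 2\Psi'(z_k)\Phi(x_k) - 2\Psi'(-z_k)\Phi(-x_k),
\]
(or its obvious $k = 2$ variant with the first two terms replaced by $-\Phi'(x_1)$), with every summand nonpositive by Lemma~\ref{lem:properties_phi_psi}.\textit{iv}, so no internal cancellation can occur. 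Taking WLOG $z_{k-1} \geq 1$ when $k \geq 3$ (the case $z_{k-1} \leq -1$ is symmetric by the even symmetry of $\Phi'$, and the $k = 2$ case uses $\Psi(1) = 1$ directly), Lemma~\ref{lem:properties_phi_psi}.\textit{ii} gives $\Psi(z_{k-1})\Phi'(x_{k-1}) > 1$ since $|x_{k-1}| < 1$. Therefore $|\partial \fmt / \partial x_{k-1} + 2\,\partial \fmt / \partial z_k| > 1$, and Cauchy--Schwarz yields
\[
\|\nabla \fmt(\vx, \vz)\|_2 \;\geq\; \sqrt{(\partial \fmt / \partial x_{k-1})^2 + (\partial \fmt / \partial z_k)^2} \;\geq\; \frac{|\partial \fmt / \partial x_{k-1} + 2\,\partial \fmt / \partial z_k|}{\sqrt{5}} \;>\; \frac{1}{\sqrt{5}} \;>\; \frac{1}{3}.
\]

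The main creative step is discovering the $(1, 2)$-combination in the hard branch; once that is found, the rest is a sign check together with one invocation of Lemma~\ref{lem:properties_phi_psi}.\textit{ii}. A minor bookkeeping hurdle is the asymmetry at $k = 2$, but the seed $-\Psi(1)\Phi(x_1)$ plays exactly the role the missing $\Psi(z_1)\Phi(x_1)$ would have, so both cases unify cleanly.
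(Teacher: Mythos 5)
Your proof is correct and follows essentially the same route as the paper's: identify the smallest phase-transition index $k$, split on $|x_{k-1}|$, and in the hard branch cancel the quadratic terms via the $(1,2)$-linear combination $\partial/\partial x_{k-1} + 2\,\partial/\partial z_k$ before invoking Lemma~\ref{lem:properties_phi_psi}.\textit{ii}. The only cosmetic differences are that you close with Cauchy--Schwarz (giving $1/\sqrt 5$) where the paper uses a max argument (giving $1/3$), and you bound the $\Psi\Phi'$ term by $e$ rather than the paper's looser $5$.
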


\begin{proof}[Proof of Lemma~\ref{lem:gradient_lower_bound}]

    We define $z_1\equiv 1$ for simplicity.
	Since $|z_i|<1$ and $|z_1|\ge 1$, we are able to find some $1<j\le i$ to be the smallest $j$ for which $|z_j|<1$. So we know $|z_{j-1}|\ge 1$. We can compute
	\begin{align*}
		\frac{\partial \fmt(\vx,\vz) }{\partial x_{j-1}} =& -\Psi(-z_{j-1})\Phi'(-x_{j-1})-\Psi(z_{j-1})\Phi'(x_{j-1})+12\left(x_{j-1}-\half z_{j}\right)\\ =: &\; p(x_{j-1},z_{j-1})+12\left(x_{j-1}-\half z_{j}\right),\\
		\frac{\partial \fmt(\vx,\vz) }{\partial z_{j}}=&-\Psi'(-z_{j})\Phi(-x_{j})-\Psi'(z_{j})\Phi(x_{j})-6\left(x_{j-1}-\half z_{j}\right)\\=:&\; q(x_j,z_j)-6\left(x_{j-1}-\half z_{j}\right).
	\end{align*}
	Note that Lemma~\ref{lem:properties_phi_psi}.\textit{iv}  implies for all $2\le i\le T$,
	\begin{align*}
		-5< p(x_{j},z_{j}) <0,\quad -20<q(x_j,z_j)<0.
	\end{align*}
	There are two possible cases
	\begin{enumerate}
		\item If $|x_{j-1}|<1$, we have $p(x_{j-1},z_{j-1})<-1$ by Lemma~\ref{lem:properties_phi_psi}.\textit{ii}. Then 
		\begin{align*}
		 	\frac{\partial \fmt(\vx,\vz) }{\partial x_{j-1}}+2\cdot	\frac{\partial \fmt(\vx,\vz) }{\partial z_{j}}=p(x_{j-1},z_{j-1}) +2q(x_j,z_j)<-1.
		\end{align*}
		Therefore we can bound
		\begin{align*}
			\norm{\nabla \fmt(\vx,\vz)}_2\ge \max\left\{\abs{\frac{\partial \fmt(\vx,\vz) }{\partial x_{j-1}}},\abs{\frac{\partial \fmt(\vx,\vz) }{\partial z_{j}}} \right\}>\frac{1}{3}.
		\end{align*}
	\item Otherwise if $|x_{j-1}|\ge1$, we have $12\abs{x_{j-1}-\half z_j}>6$. Since $\abs{p(x_{j-1},z_{j-1})}<5$, we must have \begin{align*}
			\norm{\nabla \fmt(\vx,\vz)}_2\ge\abs{\frac{\partial \fmt(\vx,\vz) }{\partial x_{j-1}}}>1.
	\end{align*}
	\end{enumerate}
	
\end{proof}

Now we verify the smoothness and boundedness requirements of the function class we consider.
\begin{lemma}
	\label{lem:bd_smt}
	$\ft$ and $\fmt$ satisfy the following.
	\begin{enumerate}[i.]
		\item $\fmt(\0,\0)-\inf_{\vx\in\R^{T},\vz\in\R^{T-1}}\fmt(\vx,\vz)\le 12T$.
		\item $\ft$ is $\ell_0$-smooth for some numerical constant $\ell_0$.
	\end{enumerate}
\end{lemma}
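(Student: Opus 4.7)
For part \textit{i}, the plan is to evaluate $\fmt$ at the origin and then derive a pointwise lower bound on $\fmt(\vx,\vz)$. Because $\Psi(0)=0$ by Lemma~\ref{lem:properties_phi_psi}.\textit{i}, every cross term with $i\ge 2$ as well as the entire quadratic tail vanish at $(\0,\0)$, leaving $\fmt(\0,\0)=-\Psi(1)\Phi(0)\le 0$, which is a numerical constant. For an arbitrary $(\vx,\vz)$, I would discard the non-negative term $6\sum(x_i-z_{i+1}/2)^2$, use $\Psi(-z_i)\Phi(-x_i)\ge 0$, and apply the pointwise bounds $\Psi<e$ and $\Phi<\sqrt{2\pi e}$ from Lemma~\ref{lem:properties_phi_psi}.\textit{iv} to conclude $\Psi(z_i)\Phi(x_i)\le e\sqrt{2\pi e}$ and $\Psi(1)\Phi(x_1)\le \sqrt{2\pi e}$. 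Summing the $T-1$ coupling terms together with the initial term yields $\fmt(\vx,\vz)\ge -\sqrt{2\pi e}-(T-1)\,e\sqrt{2\pi e}$. Since $e\sqrt{2\pi e}<12$, subtracting from $\fmt(\0,\0)\le 0$ gives $\fmt(\0,\0)-\inf\fmt\le 12T$.

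For part \textit{ii}, I would bound the spectral norm of $\nabla^2\ft$ by decomposing $\ft$ into four groups and treating each separately: (a) the Carmon-style body $-\Psi(1)\Phi(x_1)+\sum_{i=2}^T[\Psi(-z_i)\Phi(-x_i)-\Psi(z_i)\Phi(x_i)]$, (b) the concave $\vby$-quadratic terms $\sum_i -\frac{1}{2}(\vy^{(i)})^\top(I_n/n^2+A)\vy^{(i)}$, (c) the bilinear coupling $\sum_i \sqrt{C/n}\,\vb_{x_i,z_{i+1}}^\top\vy^{(i)}$, and (d) the pure quadratic penalty $\sum_i(c_1 x_i^2+c_2 z_{i+1}^2)$. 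For (a), Lemma~\ref{lem:properties_phi_psi}.\textit{iii}--\textit{iv} yield uniformly bounded second derivatives of each summand, while the chain structure has each coordinate participating in only $O(1)$ summands, reproducing the smoothness bound of \citet{carmon2019lower}. For (b), the Hessian is block-diagonal across $i$ with each block $-(I_n/n^2+A)$, whose spectral norm is at most $1/n^2+\|A\|_2\le 5$ (easily seen from $\vv^\top A\vv=\sum(v_i-v_{i+1})^2\le 4\|\vv\|_2^2$). For (d), the Hessian is diagonal with entries $2c_1$ and $2c_2$; Lemma~\ref{lem:quadratic_max} with the choices $C=12/a_2$, $c_1=C(a_2-a_1)/2$, $c_2=C(a_2-a_1)/8$ makes these numerical constants.

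The main obstacle is the bilinear coupling (c), which mixes $(\vx,\vz)$ with $\vby$. Naively, each $\vy^{(i)}$ has $n$ coordinates that could couple with $(x_i,z_{i+1})$, so a crude bound would be polluted by a $\sqrt{n}$ factor. The key fact to exploit is that $\vb_{x,z}=x\ve_1-\half z\ve_n$ has only two nonzero entries, so only $y_1^{(i)}$ and $y_n^{(i)}$ actually couple to $(x_i,z_{i+1})$, and every row and column of the Hessian's cross-block contains at most one nonzero entry of magnitude $\sqrt{C/n}$. The bound $\|H\|_2\le\sqrt{\|H\|_1\|H\|_\infty}$ then gives a cross-block spectral-norm bound of $\sqrt{C/n}=O(1)$ uniformly in $n$. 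Adding the four contributions by the triangle inequality, and using that each variable appears in only $O(1)$ summands across all groups, produces a smoothness constant $\ell_0$ independent of $T$ and $n$, as claimed.
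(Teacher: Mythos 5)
Your proof is correct. Part~\textit{i} is essentially the paper's own argument: drop the non-negative quadratic tail and the non-negative $\Psi(-z_i)\Phi(-x_i)$ terms, then apply the pointwise bounds from Lemma~\ref{lem:properties_phi_psi}.\textit{iv}. Part~\textit{ii} takes a genuinely different route. The paper observes that, after reordering the variables $(\vx, \vz, \vby)$ along the order in which they appear in the chain of Figure~\ref{fig:chain_plot}(c), the Hessian of $\ft$ becomes tridiagonal with every entry $O(1)$, so Gershgorin immediately gives a dimension-free spectral-norm bound. You instead decompose $\ft$ by term type into four additive pieces and bound each Hessian's spectral norm separately, treating the bilinear $\vx$--$\vby$ coupling with a $\norm{H}_1\norm{H}_\infty$ argument that exploits the two-entry sparsity of $\vb_{x,z}$ to avoid a spurious $\sqrt{n}$ factor. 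Both approaches are valid; the paper's reordering argument handles the bilinear coupling implicitly (each variable still has at most two chain neighbors), while yours makes the potential $\sqrt{n}$ pitfall and its resolution explicit, and avoids the need to reorder coordinates. Note also that $c_1, c_2$ depend on $n$ through $a_1, a_2$, but since $0.1\le a_1, a_2\le 20$ by Lemma~\ref{lem:matrix_inverse} they are uniformly bounded, so group (d) indeed contributes only $O(1)$ as you claim.
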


\begin{proof}[Proof of Lemma~\ref{lem:bd_smt}]\hfil
	\begin{enumerate}[i.]
		\item First note that $\fmt(\0,\0)=-\Phi(1)\Phi(0)\le 0$. Also, by Lemma~\ref{lem:properties_phi_psi}.\textit{iv}, we have for all $\vx\in\R^{T},\vz\in\R^{T-1}$,
		\begin{align*}
		    \fmt(\vx,\vz)\ge -\Psi(1)\Phi(x_1)-\sum_{i=2}^{T}\Psi(z_i)\Phi(x_i)\ge -12T.
		\end{align*}
		Therefore $\fmt(\0,\0)-\inf_{\vx\in\R^{T},\vz\in\R^{T-1}}\fmt(\vx,\vz)\le 12T$.
		\item Let $\vv=(\vx,\vz,\vby)$ be the variable of $\ft$. We know $\frac{\partial \ft}{\partial v_i\partial vj}\neq 0$ only if $i=j$ or $v_i$ and $v_j$ are directly connected in the chain shown in Figure~\ref{fig:chain_plot} (c). Therefore the Hessian of $\ft$ is tridiagonal if we rearranging the coordinates of $\vv$ according to the order of the chain. By Lemma~\ref{lem:properties_phi_psi}.\textit{iii} and the expression of $\ft$, it is straightforward to verify that each tridiaognal entry of the Hessian is $O(1)$. Therefore the $\ell_2$ norm of the Hessian is $O(1)$, which means $\ft$ is $O(1)$-smooth. 	\end{enumerate}
 \end{proof}
 
 With all the above properties of $\ft$ and $\fmt$, we are ready to show Theorem~\ref{thm:lb-nsc}.

\begin{proof}[Proof of Theorem~\ref{thm:lb-nsc}]
    As in \cite{carmon2019lower}, we construct the hard instance $\fft$ by appropriately rescaling $\ft$ defined in~\eqref{equ:unscaled_instance},
    \begin{align*}
        \fft(\vx,\vz;\vby)=\frac{L\sigmas^2}{\ell_0}\ft\left(\frac{\vx}{\sigmas},\frac{\vz}{\sigmas};\frac{\vby}{\sigmas}\right),
    \end{align*}
    where $\sigmas>0$ is some parameter to be determined later and $\ell_0$ is the smoothness parameter defined in Lemma~\ref{lem:bd_smt}.\textit{ii}. Note that we can show
    \begin{align*}
        \ffmt(\vx,\vz):=\max_{\vby\in\R^{n(T-1)}}\fft(\vx,\vz;\vby)=\max_{\vu\in\R^{n(T-1)}}\frac{L\sigmas^2}{\ell_0}\ft\left(\frac{\vx}{\sigmas},\frac{\vz}{\sigmas};\vu\right)=\frac{L\sigmas^2}{\ell_0}\fmt\left(\frac{\vx}{\sigmas},\frac{\vz}{\sigmas}\right),
    \end{align*}
    which means the order of maximization and rescaling can be interchanged. After the rescaling, $\fft$ is still a zero-chain. Also, if $z_T=0$ for some $(\vx,\vz;\vby)$, Lemma~\ref{lem:gradient_lower_bound} shows that 
    \begin{align*}
        \norm{\nabla \fmt\left(\frac{\vx}{\sigmas},\frac{\vz}{\sigmas}\right) }_2>\frac{1}{3}.
    \end{align*}
    Therefore
    \begin{align*}
        \norm{\nabla \ffmt\left(\vx,\vz\right) }_2=\frac{L\sigmas}{\ell_0}\norm{\nabla \fmt\left(\frac{\vx}{\sigmas},\frac{\vz}{\sigmas}\right) }_2>\frac{L\sigmas}{3\ell_0}.
    \end{align*}
    Choosing $\sigmas=\frac{3\ell_0\epsilon}{L}$ garautees $\norm{\nabla \ffmt\left(\vx,\vz\right) }_2>\epsilon$.
    
    Now we check $\fft\in\cF(L, \mu, \Delta)$. Note that
    \begin{align*}
        \nabla^2 \fft\left(\vx,\vz;\vby\right) =\frac{L}{\ell_0}\nabla^2 \ft\left(\frac{\vx}{\sigmas},\frac{\vz}{\sigmas};\frac{\vby}{\sigmas}\right).
    \end{align*}
    Therefore we know the smoothness parameter of $\fft$ is $L$ and the strong concavity parameter is $\frac{L}{\ell_0 n^2}$. Therefore we should choose
    \begin{align*}
        n=\left\lfloor\sqrt{\frac{L}{\mu \ell_0}}\right\rfloor
    \end{align*}
    to make $\fft$ $\mu$-strongly concave in $\vby$. 
    
    Then it suffices to verify $\ffmt(\0,\0)-\inf_{\vx,\vz}\ffmt(\vx,\vz)\le \Delta$. By Lemma~\ref{lem:bd_smt},
    \begin{align*}
        \ffmt(\0,\0)-\inf_{\vx,\vz}\ffmt(\vx,\vz)=\frac{L\sigmas^2}{\ell_0}\left(\fmt(\0,\0)-\inf_{\vx,\vz}\fmt(\vx,\vz)\right)\le \frac{12LT\sigmas^2}{\ell_0},
    \end{align*}
    which is less than $\Delta$ if choosing
    \begin{align*}
        T=\left\lfloor{\frac{\ell_0\Delta}{12L\sigmas^2}}\right\rfloor=\left\lfloor{\frac{L\Delta}{108\ell_0\epsilon^2}}\right\rfloor.
	\end{align*}
    Since $z_T^{t}=0$ if $t\le n(T-1)$, we conclude that $\norm{\nabla \ffmt(\vx^t,\vz^t)}_2>\epsilon$ whenever
    \begin{align*}
        t\le n(T-1)=\frac{c_0 L\Delta\sqrt{\kappa}}{\epsilon^2}
    \end{align*}
    for some numerical constant $c_0$.
    \end{proof}

\section{Proofs for stochastic lower bound}
\label{sec:appendix_stochastic}
\begin{lemma}
\label{lem:eqv_of_gm}
    Let $\gs_m(x, z):=\max_{\vy\in \cC^{n}_{nR_2} }\gs(x, z;\vy)$. If $R_2\ge 30 R_1$, for every $x,z$ such that $|x|,|z|\le R_1$, we have
    \begin{align*}
        \gs_m(x, z)=\gd_m(x,z),
    \end{align*}
    where $\gd_m$ is the quadratic function defined in~\eqref{equ:gm}.
\end{lemma}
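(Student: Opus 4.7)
The plan is to show that, under the stated hypotheses, the unconstrained maximizer of $\gs(x,z;\cdot)$ over $\R^n$ already lies inside the hypercube $\cC^n_{nR_2}$, so the constrained and unconstrained maxima coincide; then a direct algebraic comparison identifies the unconstrained maximum with $\gd_m(x,z)$.

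First, I would compute the unconstrained maximizer explicitly. Setting $M := \frac{1}{n^2}I_n + A$ and recalling $\vb_{x,z} = x\ve_1 - \tfrac{1}{2}z\ve_n$, the quadratic $\gs(x,z;\vy) = \frac{C}{n}\bigl[-\tfrac{1}{2}\vy^\top M\vy + \vb_{x,z}^\top \vy\bigr]$ is strictly concave in $\vy$ (since $M \succ 0$), and its unconstrained maximizer is $\vy^{\star} := M^{-1}\vb_{x,z}$. Plugging this in yields
\begin{align*}
    \max_{\vy \in \R^n} \gs(x,z;\vy) \;=\; \frac{C}{2n}\,\vb_{x,z}^\top M^{-1}\vb_{x,z} \;=\; \gd_m(x,z),
\end{align*}
by the closed form in~\eqref{equ:gm}. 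So the only remaining task is to verify $\vy^\star \in \cC^n_{nR_2}$, i.e. $\|\vy^\star\|_\infty \le nR_2$.

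Next I would bound the coordinates of $\vy^\star$ entrywise. Writing $B = M^{-1}$, we have $\vy^\star = x B_{:,1} - \tfrac{1}{2} z B_{:,n}$, so for every $1 \le i \le n$,
\begin{align*}
    |y^\star_i| \;\le\; |x|\,|B_{i,1}| + \tfrac{1}{2}|z|\,|B_{i,n}|.
\end{align*}
Lemma~\ref{lem:matrix_inverse} gives $0.1n \le B_{i,1} \le 20n$ for all $i$; by the symmetry of $A$ (and hence of $B$) under the reversal permutation that swaps coordinate $j$ with $n+1-j$, the same bound applies to $B_{i,n}$. Using $|x|,|z| \le R_1$, we obtain
\begin{align*}
    |y^\star_i| \;\le\; 20 n R_1 + \tfrac{1}{2}\cdot 20 n R_1 \;=\; 30 n R_1 \;\le\; n R_2,
\end{align*}
where the last step uses the hypothesis $R_2 \ge 30 R_1$. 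Therefore $\vy^\star \in \cC^n_{nR_2}$, which forces $\gs_m(x,z) = \max_{\vy \in \R^n}\gs(x,z;\vy) = \gd_m(x,z)$, completing the proof.

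The only potential obstacle is the symmetry argument bounding $B_{i,n}$ from the bound on $B_{i,1}$; if one prefers to avoid invoking symmetry, the analogous three-term recurrence calculation used in the proof of Lemma~\ref{lem:matrix_inverse} can be run on the cofactors $M_{n,i}$ instead of $M_{1,i}$, yielding the same constants. Everything else is a direct consequence of strict concavity and the explicit quadratic formulas.
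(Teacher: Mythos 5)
Your proposal is correct and mirrors the paper's own proof: both reduce to showing the unconstrained maximizer $\vy^\star = B\vb_{x,z}$ lies in $\cC^n_{nR_2}$, bound each $|y^\star_i|$ by $20nR_1 + \tfrac{1}{2}\cdot 20nR_1 = 30nR_1 \le nR_2$ via Lemma~\ref{lem:matrix_inverse}, and invoke the (per)symmetry of $B$ to transfer the bound on $B_{i,1}$ to $B_{i,n}$. No meaningful differences in approach.
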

\begin{proof}[Proof of Lemma~\ref{lem:eqv_of_gm}]
    Note that 
    \begin{align*}
        \max_{\vy\in\R^{n}}\gs(x, z;\vy) = \frac{C}{2n}\vb_{x,z}^\top \left(\frac{1}{n^2} I_n+A\right)^{-1} \vb_{x,z} = \gd_m(x,z).
    \end{align*}
    It suffices to verify that 
    \begin{align*}
        \max_{\vy\in \cC^{n}_{nR_2} }\gs(x, z;\vy)=\max_{\vy\in\R^{n}}\gs(x, z;\vy),
    \end{align*}
    i.e., 
    \begin{align*}
        \vy^\ast(x,z):=\argmax_{\vy\in\R^{n}}\gs(x, z;\vy)\in \cC^{n}_{nR_2}.
    \end{align*}
    We can compute that
    \begin{align*}
        \vy^\ast(x,z) =&  \left(\frac{1}{n^2} I_n+A\right)^{-1} \vb_{x,z}=B\cdot \vb_{x,z},
    \end{align*}
    where $B=\left(\frac{1}{n^2} I_n+A\right)^{-1}$ is the matrix defined in Lemma~\ref{lem:matrix_inverse}.
    Let $y_i^\ast(x,z)$ be the $i$-th coordinate of $\vy^\ast(x,z)$ for some $1\le i\le n$. By symmetry of $B$ and Lemma~\ref{lem:matrix_inverse}, we have
    \begin{align*}
        \abs{y_i^\ast(x,z)} =& \abs{xB_{i,1}-\frac{1}{2}zB_{i,n}}\\
        =&\abs{xB_{i,1}-\frac{1}{2}zB_{n-i,1}}\\
        \le &30nR_1\le nR_2.
    \end{align*}
    Therefore $\vy^\ast(x,z)\in \cC^{n}_{nR_2}$ and we complete the proof.
\end{proof}

Now we analyze the properties of $\fts$ and $\fmts$. 

\begin{lemma}
	\label{lem:bd_smt_sg}
	$\fts$ and $\fmts$ satisfies the following.
	\begin{enumerate}[i.]
		\item $\fmts(\0,\0)-\inf_{\vx\in\bcX,\vz\in\bcZ}\fmts(\vx,\vz)\le 12T$.
		\item $\fts$ is $\ell_0$-smooth for some numerical constant $\ell_0$.
		\item $\fmts$ is $\ell_m$-smooth for some numerical constant $\ell_m\ge 1$.
		\item For all $\vx,\vz,\vby$, $\norm{\nabla \fts(\vx,\vz;\vby)}_\infty \le G$ for some numerical constant $G$. 
	\end{enumerate}
\end{lemma}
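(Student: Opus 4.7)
The plan is to verify the four claims sequentially, leveraging the identity $\fmts=\fmt$ on $\bcX\times\bcZ$ established in Lemma~\ref{lem:eqv_of_gm}: this lets me recycle bounds already proved for $\fmt$ and reduces most of the work to routine differentiation on a bounded domain.

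For claim (i), since $\fmts=\fmt$ on the restricted domain and the infimum over a subset can only be larger,
\begin{align*}
\fmts(\0,\0)-\inf_{\vx\in\bcX,\vz\in\bcZ}\fmts(\vx,\vz)\le \fmt(\0,\0)-\inf_{\vx\in\R^{T},\vz\in\R^{T-1}}\fmt(\vx,\vz)\le 12T,
\end{align*}
where the last step is Lemma~\ref{lem:bd_smt}(i). For claim (iii), the same identity reduces the task to bounding the Hessian of the explicit expression for $\fmt$ displayed just before Lemma~\ref{lem:quadratic_max}. Reordering coordinates into chain order $x_1,z_2,x_2,z_3,\dots,x_T$ makes the Hessian tridiagonal --- only $(x_i,z_i)$ pairs (via the $\Psi$-$\Phi$ terms) and $(x_i,z_{i+1})$ pairs (via the quadratic $6(x_i-z_{i+1}/2)^2$) interact --- and each nonzero entry is $O(1)$ by Lemma~\ref{lem:properties_phi_psi}(iii)--(iv) together with the explicit coefficient $6$. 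A Gershgorin estimate then yields a numerical constant $\ell_m$.

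For claim (ii), I would write the Hessian of $\fts$ in the coordinates $(\vx,\vz,\vby)$. Its sparsity follows Figure~\ref{fig:chain_plot}(c): the $\Psi$-$\Phi$ terms and $c_1 x_i^2, c_2 z_{i+1}^2$ contribute $O(1)$ entries as above, while each block $\gs(x_i,z_{i+1};\vby^{(i)})$ contributes a $\vby$-$\vby$ block equal to $-\tfrac{C}{n}(\tfrac{1}{n^2}I_n+A)$, whose spectral norm is $O(C/n)$ since $\|A\|_2\le 4$, together with cross partials of size $O(C/n)$ between $(x_i,z_{i+1})$ and a single coordinate of $\vby^{(i)}$. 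A block-Gershgorin argument on this essentially banded pattern delivers a numerical smoothness constant $\ell_0$.

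For claim (iv), I would differentiate $\fts$ coordinate by coordinate and invoke the hypercube bounds $|x_i|,|z_{i+1}|\le R_1$ and $|y_j^{(i)}|\le nR_2$. The $x$- and $z$-partials combine bounded $\Psi,\Phi,\Psi',\Phi'$ contributions (Lemma~\ref{lem:properties_phi_psi}(iv)), a linear-in-$\vby$ term from $\gs$ of size $\tfrac{C}{n}\cdot nR_2=CR_2$, and the terms $2c_1 x_i, 2c_2 z_{i+1}$ of size $O(R_1)$; all are numerical constants. The $\vby^{(i)}$-partials come entirely from $\gs$: the term $\tfrac{C}{n}(\tfrac{1}{n^2}I_n+A)\vy$ is bounded coordinate-wise by $\tfrac{C}{n}\cdot 4nR_2=4CR_2$ (each row of $A$ has absolute row-sum at most $4$), and $\tfrac{C}{n}\vb_{x,z}$ contributes $O(R_1/n)$. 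Taking the maximum yields $G=O(1)$. The only real obstacle is (iv): the entire purpose of rescaling the $\gs$ block by $\tfrac{C}{n}$ rather than $\sqrt{C/n}$ (as in the deterministic $\gd$) is precisely to cancel the factor of $n$ coming from the diameter $nR_2$ of $\bcY$, so that every $\vby$-partial stays $O(1)$. One must simultaneously verify that this rescaling does not destroy the equivalence $\gs_m=\gd_m$ used in (i) and (iii); Lemma~\ref{lem:eqv_of_gm} guarantees this consistency as long as $R_2\ge 30R_1$, after which the remaining bookkeeping is routine.
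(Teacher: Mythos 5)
Your proof is correct and follows essentially the same route as the paper's: reduce (i) and (iii) to properties of $\fmt$ via the identity $\fmts=\fmt$ from Lemma~\ref{lem:eqv_of_gm}, and verify (ii) and (iv) by directly examining the banded Hessian and coordinate-wise gradients of $\fts$. Your treatment is in fact slightly more careful than the paper's (which labels (ii) a ``direct corollary'' of Lemma~\ref{lem:bd_smt} even though $\gs$ and $\gd$ differ in scaling), and you correctly identify the $\tfrac{C}{n}$ factor in $\gs$ as the mechanism that keeps $\norm{\nabla\fts}_\infty$ bounded.
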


\begin{proof}[Proof of Lemma~\ref{lem:bd_smt_sg}]
Note that $\bcX\times\bcZ\subset \R^{T}\times\R^{T-1}$. Then \textit{i} and \textit{ii} are direct corollaries of Lemma~\ref{lem:bd_smt}. We can prove \textit{iii} in the same way as \textit{ii}. It is also straightforward to verify \textit{iv} given Lemma~\ref{lem:properties_phi_psi}.\textit{iii} and \textit{iv} and noting the infinity norms of $\vx$, $\vz$, and $\vby$ are all bounded.
\end{proof}

The lemma below shows we cannot find a good solution unless the end of the chain is reached.
\begin{lemma}
	\label{lem:gradient_lower_bound_sg}
	If $|z_i|< 1$ for some $i\le T$, then $(\vx,\vz)$ is not a $1/3$-stationary point of $\fmts$.
\end{lemma}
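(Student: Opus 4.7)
The plan is to reduce the claim to Lemma~\ref{lem:gradient_lower_bound} by exploiting the identity $\fmts(\vx,\vz)=\fmt(\vx,\vz)$ on $\bcX\times\bcZ$ (guaranteed by Lemma~\ref{lem:eqv_of_gm}), and then to upgrade the unconstrained gradient bound into a bound on the constrained stationarity measure, using the facts that the domain is a hypercube of $\ell_\infty$-radius $R_1\ge 2$ and that $\ell_m\ge 1$ (Lemma~\ref{lem:bd_smt_sg}.\textit{iii}).

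First, since $\fmts=\fmt$ pointwise on $\bcX\times\bcZ$, their gradients agree there as well. I will invoke the proof of Lemma~\ref{lem:gradient_lower_bound}: taking $j$ to be the smallest index with $|z_j|<1$ (using $z_1\equiv 1$), the argument splits into Case~1 ($|x_{j-1}|<1$, giving $\max\{|\partial\fmt/\partial x_{j-1}|,|\partial\fmt/\partial z_j|\}>1/3$) and Case~2 ($|x_{j-1}|\ge 1$, giving $|\partial\fmt/\partial x_{j-1}|>1$, and crucially, $\mathrm{sign}(\partial\fmt/\partial x_{j-1})=\mathrm{sign}(x_{j-1})$ because the term $12(x_{j-1}-z_j/2)$ dominates the bounded $p(x_{j-1},z_{j-1})$).

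Second, since the domain is a hypercube, the projection $\cP_{\bcX\times\bcZ}$ is coordinate-wise, and for any coordinate $w$ with partial derivative $g:=\partial_w \fmts(\vx,\vz)$,
\begin{equation*}
\ell_m\,\bigl|\cP_{[-R_1,R_1]}\bigl(w-\tfrac{1}{\ell_m}g\bigr)-w\bigr|
=\min\bigl(|g|,\;\ell_m(R_1+w\cdot\mathrm{sign}(g))\bigr).
\end{equation*}
In Case~1, apply this to the coordinate $w\in\{x_{j-1},z_j\}$ realizing the maximum: since $|w|<1$ and $R_1\ge 2$, the slack $R_1+w\cdot\mathrm{sign}(g)\ge R_1-|w|>1$, so the second argument exceeds $\ell_m\ge 1$, while the first exceeds $1/3$; hence the min is $>1/3$. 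In Case~2, apply it to $x_{j-1}$: the sign matching yields $R_1+x_{j-1}\cdot\mathrm{sign}(g)=R_1+|x_{j-1}|\ge 3$, so both arguments of the min exceed $1/3$ (the first is $>1$, the second is $\ge 3\ell_m\ge 3$). Projecting onto the product hypercube only increases the norm over a single coordinate's contribution, so $\ell_m\|\cP[\cdot]-(\vx,\vz)\|_2>1/3$, i.e., $(\vx,\vz)$ is not a $1/3$-stationary point of $\fmts$.

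The main obstacle is the sign bookkeeping in Case~2: one must verify that the gradient coordinate that ``blows up'' points toward the \emph{interior} of the hypercube (so that projection does not clip away the signal) or, if it does point outward and clipping occurs, that the slack to the opposite face is still at least order one. This is exactly what the choice $R_1\ge 2$ and the fact that $|x_{j-1}|\le R_1$ ensure in tandem with the sign-matching observation above; no other subtlety enters, since Case~1 already places both relevant coordinates comfortably in the interior.
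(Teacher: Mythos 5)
Your proposal is correct and takes essentially the same route as the paper: reduce to the unconstrained gradient bound of Lemma~\ref{lem:gradient_lower_bound} via $\fmts=\fmt$, then show that the coordinate-wise hypercube projection cannot clip away the large partial derivative because either the iterate stays well inside the cube (Case~1, $|w|<1<R_1-1$) or the gradient points toward the interior (Case~2/3, sign matching from the dominating quadratic term). Your closed-form identity $\ell_m|\cP_{[-R_1,R_1]}(w-g/\ell_m)-w|=\min(|g|,\ell_m(R_1+w\cdot\mathrm{sign}(g)))$ cleanly packages what the paper handles through separate sub-cases on whether $|u'|\le R_1$, but it is the same argument.
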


\begin{proof}[Proof of Lemma~\ref{lem:gradient_lower_bound_sg}]
	Let $1<j\le i$ to be the smallest $j$ for which $|z_j|<1$. Similar to the proof of Lemma~\ref{lem:gradient_lower_bound}, noting $\fmt=\fmts$, we have
	\begin{align*}
		\frac{\partial \fmts(\vx,\vz) }{\partial x_{j-1}} =& p(x_{j-1},z_{j-1})+12\left(x_{j-1}-\half z_{j}\right),\\
		\frac{\partial \fmts(\vx,\vz) }{\partial z_{j}}=& q(x_j,z_j)-6\left(x_{j-1}-\half z_{j}\right),
	\end{align*}
	where
	\begin{align*}
		-5< p(x_{j-1},z_{j-1})
		<0,\quad -20<q(x_j,z_j)<0.
	\end{align*}
	There are two possible cases
	\begin{enumerate}
		\item If $|x_{j-1}|<1$, we know $p(x_{j-1},z_{j-1})<-1$ by Lemma~\ref{lem:properties_phi_psi}.\textit{ii}. Then 
		\begin{align*}
		 	\frac{\partial \fmts(\vx,\vz) }{\partial x_{j-1}}+2\cdot	\frac{\partial \fmts(\vx,\vz) }{\partial z_{j}}=p(x_{j-1},z_{j-1}) +2q(x_j,z_j)<-1.
		\end{align*}
		Therefore we can bound
		\begin{align*}
		 \max\left\{\abs{\frac{\partial \fmts(\vx,\vz) }{\partial x_{j-1}}},\abs{\frac{\partial \fmts(\vx,\vz) }{\partial z_{j}}} \right\}>\frac{1}{3}.
		\end{align*}
		Suppose $u$ is one of $x_{j-1}$ and $z_j$ such that $\abs{\frac{\partial \fmts(\vx,\vz) }{\partial u}}>1/3$. We also know $|u|<1$. 
		Let $\ell_m$ be the smoothness parameter of $\fmts$ defined in Lemma~\ref{lem:bd_smt_sg}.\textit{iii}.
		Define
		\begin{align}
		    u':=u-\frac{1}{\ell_m}\frac{\partial \fmts(\vx,\vz) }{\partial u}.
		    \label{equ:uprime}
		\end{align}
		\begin{enumerate}[i.]
		    \item If $|u'|\le R_1$, we have \begin{align*}
		        \ell_m \abs{\cP_{\cC^1_{R_1}}(u')-u}=\ell_m \abs{u'-u}=\abs{\frac{\partial \fmts(\vx,\vz) }{\partial u}}>1/3.
		    \end{align*}
		    \item If $|u'|>R_1$, we know that $\abs{\cP_{\cC^1_{R_1}}(u')}=R_1$. Then we have
		    \begin{align*}
		        \ell_m \abs{\cP_{\cC^1_{R_1}}(u')-u}>\ell_m(R_1-1)\ge 1.
		    \end{align*}
		\end{enumerate}
	\item If $x_{j-1}\ge 1$, we have $12({x_{j-1}-\half z_j})>6$. Since $-5<{p(x_{j-1},z_{j-1})}<0$, we must have \begin{align*}
		{\frac{\partial \fmt(\vx,\vz) }{\partial x_{j-1}}}>1.
	\end{align*}
	Similar to case~1, we use $u$ to denote $x_{j-1}$ and define $u'$ as in~\eqref{equ:uprime}. We know $u'<u$. Therefore
    \begin{enumerate}[i.]
		    \item If $|u'|\le R_1$, we have \begin{align*}
		        \ell_m \abs{\cP_{\cC^1_{R_1}}(u')-u}=\abs{\frac{\partial \fmts(\vx,\vz) }{\partial u}}>1.
		    \end{align*}
		    \item If $u'<-R_1$, we know that ${\cP_{\cC^1_{R_1}}(u')}=-R_1$. Then we have
		    \begin{align*}
		        \ell_m \abs{\cP_{\cC^1_{R_1}}(u')-u}>\ell_m(R_1+1)\ge 1.
		    \end{align*}
		\end{enumerate}	
	\item If $x_{j-1}\le -1$, we have we have $12({x_{j-1}-\half z_j})<-6$. Since $-5<{p(x_{j-1},z_{j-1})}<0$, we must have \begin{align*}
		{\frac{\partial \fmt(\vx,\vz) }{\partial x_{j-1}}}<-6<-1.
	\end{align*}
	Then similar to case~2, we can show $\ell_m \abs{\cP_{\cC^1_{R_1}}(u')-u}>\ell_m(R_1+1)\ge 1$.
	\end{enumerate}
To sum up, we have
\begin{align*}
    \ell_m \norm{ \cP_{\bcX\times\bcZ}\left((\vx,\vz)-\frac{1}{\ell_m}\nabla \fmts(\vx,\vz)\right)-(\vx,\vz) }_2\ge \ell_m \abs{\cP_{\cC^1_{R_1}}(u')-u}> 1/3,
\end{align*}
i.e., $(\vx,\vz)$ is not a $1/3$-stationary point of $\fmts$.
	
\end{proof}

With all the lemmas above, we are ready to prove Theorem~\ref{thm:lb-nsc-s}.
\begin{proof}[Proof of Theorem~\ref{thm:lb-nsc-s}]
	Similar to the proof of Theorem~\ref{thm:lb-nsc}, we show the lower bound by appropriately rescaling $\fts$ as well as its domain. 
	Formally, define $\ffts:\left(\cC^{T}_{\sigmas R_1}\times \cC^{T-1}_{\sigmas R_1}\right)\times \cC^{n(T-1)}_{\sigmas n R_2}\to \R$ as
    \begin{align*}
        \ffts(\vx,\vz;\vby) = \frac{L\sigmas^2}{\ell_0} \fts\left(\frac{\vx}{\sigmas},\frac{\vz}{\sigmas};\frac{\vby}{\sigmas}\right),
    \end{align*}
    where $\sigmas>0$ is some parameter to be determined later and $\ell_0$ is the smoothness parameter defined in Lemma~\ref{lem:bd_smt_sg}.\textit{ii}. Note that we can show
    \begin{align*}
        \ffmts(\vx,\vz):=&\max_{\vby\in\cC^{n(T-1)}_{\sigmas n R_2} }\ffts(\vx,\vz;\vby)\\
        =&\frac{L\sigmas^2}{\ell_0}\max_{\vu\in\cC^{n(T-1)}_{ nR_2} }\fts\left(\frac{\vx}{\sigmas},\frac{\vz}{\sigmas};\vu\right)\\
        =&\frac{L\sigmas^2}{\ell_0} \fmts\left(\frac{\vx}{\sigmas},\frac{\vz}{\sigmas}\right)
    \end{align*}
    which means the order of maximization and rescaling can be interchanged. After the rescaling, $\ffts$ is still a zero-chain.
    Note that $\ffmts$ is $\ell_m L/\ell_0$-smooth. When $z_T=0$, by Lemma~\ref{lem:gradient_lower_bound_sg},
    \begin{align*}
        &\frac{\ell_mL}{\ell_0}\norm{\cP_{\cC^{T}_{\sigmas R_1}\times \cC^{T-1}_{\sigmas R_1}}
        \left((\vx,\vz)-\frac{\ell_0}{\ell_m L}\nabla \ffmts(\vx,\vz)
        \right)-(\vx,\vz)
        }_2\\
    =& \frac{\ell_mL}{\ell_0}\norm{\cP_{\cC^{T}_{\sigmas R_1}\times \cC^{T-1}_{\sigmas R_1}}
        \left(\sigmas\left(\frac{\vx}{\sigmas},\frac{\vz}{\sigmas}\right)-\frac{\sigmas}{\ell_m }\nabla \fmts\left(\frac{\vx}{\sigmas},\frac{\vz}{\sigmas}\right)
        \right)-\sigmas\left(\frac{\vx}{\sigmas},\frac{\vz}{\sigmas}\right)
        }_2\\
        =& \frac{L\sigmas}{\ell_0} \ell_m\norm{\cP_{\cC^{T}_{ R_1}\times \cC^{T-1}_{ R_1}}
        \left(\left(\frac{\vx}{\sigmas},\frac{\vz}{\sigmas}\right)-\frac{1}{\ell_m }\nabla \fmts\left(\frac{\vx}{\sigmas},\frac{\vz}{\sigmas}\right)
        \right)-\left(\frac{\vx}{\sigmas},\frac{\vz}{\sigmas}\right)
        }_2\\
        >&\frac{L\sigmas}{3\ell_0}.
    \end{align*}
    Choosing $\sigmas = \frac{6\ell_0\epsilon}{L}$ guarantees such $(\vx, \vz)$ is not a $2\epsilon$-stationary point of $\ffts$.
    
    Now we check $\ffts\in\cF(L, \mu, \Delta)$. Note that
    \begin{align*}
        \nabla^2 \ffts\left(\vx,\vz;\vby\right) =\frac{L}{\ell_0}\nabla^2 \fts\left(\frac{\vx}{\sigmas},\frac{\vz}{\sigmas};\frac{\vby}{\sigmas}\right).
    \end{align*}
    We know the smoothness parameter of $\ffts$ is $L$ and the strong concavity parameter is $\frac{L}{\ell_0 n^3}$. Therefore we should choose
    \begin{align*}
        n=\left\lfloor\left({\frac{L}{\mu \ell_0}}\right)^{1/3}\right\rfloor
    \end{align*}
    to make $\ffts$ $\mu$-strongly concave in $\vby$. Then it suffices to show $\ffmts(\0,\0)-\inf_{\vx,\vz}\ffmts(\vx,\vz)\le \Delta$. By Lemma~\ref{lem:bd_smt},
    \begin{align*}
        \ffmts(\0,\0)-\inf_{\vx,\vz}\ffmts(\vx,\vz)=\frac{L\sigmas^2}{\ell_0}\left(\fmts(\0,\0)-\inf_{\vx,\vz}\fmts\left(\frac{\vx}{\sigmas},\frac{\vz}{\sigmas}\right)\right)\le \frac{12LT\sigmas^2}{\ell_0},
    \end{align*}
    which is no greater than $\Delta$ if choosing
    \begin{align*}
        T=\left\lfloor{\frac{\ell_0\Delta}{12L\sigmas^2}}\right\rfloor=\left\lfloor{\frac{L\Delta}{432\ell_0\epsilon^2}}\right\rfloor.
	\end{align*}

	Now we construct the stochastic gradient oracle in the same way as~\citep{arjevani2019lower}. We perturb the gradient only on the next coordinate to discover, so that we reveal its value with probability $p$. Let $i^{\ast}(\vx, \vz; \vby)$ denote the next coordinate to discover in the zero-chain in Figure~\ref{fig:chain_plot}(c). Precisely, we set the stochastic gradient to be 
	\begin{align*}
		\vg(\vx, \vz; \vby;\xi)_i = \left\{
		\begin{array}{ll}
			\frac{\xi}{p} \nabla_i \ffts(\vx, \vz; \vby) & \text{ if } i = i^{\ast}(\vx, \vz; \vby) \\
			\nabla_i \ffts(\vx, \vz; \vby) & \text{otherwise,}
		\end{array}\right.
	\end{align*}
	where $\xi \sim \Ber(p)$. By Lemma~\ref{lem:arj-l3}, $\ffts$ is a probability-$p$ zero-chain with this oracle which has variance bounded by
	\begin{align*}
		\E\left[\norm{\vg(\vx, \vz; \vby;\xi) - \nabla \ffts(\vx, \vz; \vby)}_2^2\right] \le \left(\frac{G L\sigmas}{\ell_{0}}\right)^2 \frac{1-p}{p} = 36\epsilon^2 G^2 \frac{1-p}{p}.
	\end{align*}
	Hence, the variance is no greater than $\sigmav^2$ if $p = \min\{1, \frac{36 \epsilon^2 G^2}{\sigmav^2}\}$. By Lemma~\ref{lem:arj-l1}, with probability $1 - \delta$, $z_{T}^{t} = 0$ for all 
	\begin{align*}
		t \le \frac{n (T - 1) - \log(1/\delta)}{2p}. 
	\end{align*}
	Then taking $\delta = 1/2$ yields that whenever
	\begin{align*}
		t \le \frac{n (T - 1) - 1}{2 \frac{36 \epsilon^2 G^2}{\sigmav^2}} = \frac{c'nT \sigmav^2}{\epsilon^2 G^2} = \frac{c'_0 L \Delta \sigmav^2 \kappa^{1/3}}{\epsilon^4},
	\end{align*}
	for some constant $c', c_0 > 0$, we have 
	\begin{align*}
		\E\left[\frac{\ell_mL}{\ell_0}\norm{\cP_{\cC^{T}_{\sigmas R_1}\times \cC^{T-1}_{\sigmas R_1}}
        \left((\vx,\vz)-\frac{\ell_0}{\ell_m L}\nabla \ffmts(\vx,\vz)
        \right)-(\vx,\vz)
        }_2\right] \ge \frac{1}{2} \cdot 2\epsilon = \epsilon.
	\end{align*}
	That is, $(\vx^t, \vz^t)$ is not an $\epsilon$-stationary point. So far we have derived a lower bound of $\Omega(\frac{L\Delta\sigma^2\ky^{1/3}}{\epsilon^4})$. Note that the deterministic lower bound is $\Omega(\frac{L\Delta\sqrt{\ky}}{\epsilon^2})$ which is a special case of the stochastic setting. Therefore we derive a lower bound of 
	\begin{align*}
	    \Omega\left(L\Delta\max\left\{\frac{\sqrt{\ky}}{\epsilon^2},\frac{\ky^{1/3}\sigma^2}{\epsilon^4}\right\}\right)=\Omega\left(L\Delta\left(\frac{\sqrt{\ky}}{\epsilon^2}+\frac{\ky^{1/3}\sigma^2}{\epsilon^4}\right)\right).
	\end{align*}
\end{proof}

\end{document}